\documentclass[a4paper,leqno,11pt]{amsart}

\raggedbottom
\hfuzz3pt
\usepackage{epsf,graphicx,epsfig}
\usepackage{amscd}
\usepackage{amsmath,latexsym,amssymb,amsthm}
\usepackage[nospace,noadjust]{cite}
\usepackage{textcomp}
\usepackage{setspace,cite}
\usepackage{lscape,fancyhdr,fancybox}
\usepackage[all,cmtip]{xy}
\usepackage{tikz}
\usetikzlibrary{shapes,arrows,decorations.markings}
\setlength{\unitlength}{0.4in}

\usepackage{graphicx}

\usepackage{color}
\usepackage{url}
\usepackage{enumerate}
\usepackage[mathscr]{euscript}

\setlength{\topmargin}{3mm}
\setlength{\textheight}{9.0in}
\setlength{\oddsidemargin}{.1in}
\setlength{\evensidemargin}{.1in}
\setlength{\textwidth}{6.0in}
\setlength{\textwidth}{6.0in}

  \theoremstyle{plain}

\swapnumbers
    \newtheorem{thm}{Theorem}[section]
    \newtheorem{prop}[thm]{Proposition}
   \newtheorem{lemma}[thm]{Lemma}
    \newtheorem{corollary}[thm]{Corollary}
    
    \newtheorem{subsec}[thm]{}
\theoremstyle{definition}
    \newtheorem{defn}[thm]{Definition}
    \newtheorem{exam}[thm]{Example}

\theoremstyle{remark}
     \newtheorem{remark}[thm]{Remark}

\newcommand{\HH}{\mathcal{H}}

\title{}
\author{}
\date{}
\usepackage{amssymb}

\usepackage{hyperref}
\hypersetup{
	colorlinks,
	citecolor=blue,
	filecolor=black,
	linkcolor=blue,
	urlcolor=black
}

\begin{document}
\title{Nambu structures and associated bialgebroids}

\author{Samik Basu}
\email{samik.basu2@gmail.com; mcssb@iacs.res.in}
\address{Department of Mathematical and Computational Science,
Indian Association for the Cultivation of Science, 
Kolkata-700032, India.}

\author{Somnath Basu}
\email{somnath.basu@iiser.ac.in}
\address{Department of Mathematics and Statistics,
Indian Institute of Science Education and Research, Mohanpur-741246,
West Bengal, India.}

\author{Apurba Das}
\email{apurbadas348@gmail.com}
\address{Stat-Math Unit,
Indian Statistical Institute, Kolkata 700108,
West Bengal, India.}

\author{Goutam Mukherjee}
\email{gmukherjee.isi@gmail.com}
\address{Stat-Math Unit,
Indian Statistical Institute, Kolkata 700108,
West Bengal, India.}

\subjclass[2010]{Primary: 17B62, 17B63; Secondary: 53C15, 53D17.}
\keywords{$n$-ary operation, Nambu-Poisson bracket, Gerstenhaber bracket, Lie bialgebroid.}

\thispagestyle{empty}

\begin{abstract}
This paper investigates higher order generalizations of well known results for Lie algebroids and bialgebroids. It is proved that $n$-Lie algebroid structures correspond to $n$-ary generalization of Gerstenhaber algebras and are implied by $n$-ary generalization of linear Poisson structures on the dual bundle. A Nambu-Poisson manifold (of order $n>2$) gives rise to a special bialgebroid structure which is referred to as a weak Lie-Filippov bialgebroid (of order $n$). It is further demonstrated that such bialgebroids canonically induce a Nambu-Poisson structure on the base manifold. Finally, the tangent space of a Nambu Lie group gives an example of a weak Lie-Filippov bialgebroid over a point. 
\end{abstract}
\maketitle


\vspace{0.5cm}

\section{Introduction}\label{$1$} 

This paper investigates properties of $n$-ary structures on smooth manifolds and smooth vector bundles for $n\geq 3$. Y. Nambu \cite{nambu} introduced a new mechanics which is based on $3$-ary operations, now known as Nambu mechanics. To outline the basic principle of Nambu mechanics, L. Takhtajan \cite{takhtajan}, introduced the notion of a Nambu-Poisson manifold as an $n$-ary generalization of Poisson manifold. Later, Nambu mechanics and properties of Nambu-Poisson manifolds were extensively studied by several authors (\cite{alekseevsky-guha}, \cite{bayen-flato}, \cite{ciccoli}, \cite{gautheron}, \cite{hagi-2002}, \cite{hagi}, \cite{ilmp}, \cite{illmp}, \cite{marmo-vilasi-vinogradov}, \cite{nakanishi}, \cite{wade}, \cite{wang}). A significant difference between a Poisson manifold and a Nambu-Poisson manifold of order greater than 2 is that in the latter case, the associated (Nambu) tensor is locally decomposable. We refer \cite{das-reduction-sing}, \cite{das-reduction-reg}, \cite{das-gon-muk}, \cite{jurco-schupp}, \cite{jurco-schupp-vysoky}, \cite{leon-sardon}, for some recent work on the subject. Filippov \cite{filippov} introduced $n$-Lie algebras as an $n$-ary generalization of Lie algebras. These are also referred to as Filippov algebras. There is growing interest to understand $n$-ary generalizations of the concepts of Lie algebras and Poisson manifolds. J. Grabowski and G. Marmo \cite{gra-mar} introduced an $n$-ary generalization of Lie algebroid, known as $n$-Lie algebroid or Filippov algebroid of order $n$. 
 
It is well known \cite{mac} that Lie algebroid structures on a smooth vector bundle $A$ over $M$ are in one-to-one correspondence with Gerstenhaber brackets on the graded algebra $\Gamma(\Lambda^\bullet A)$. The bracket together with the standard wedge product of multisections makes $\Gamma(\Lambda^\bullet A)$ a Gerstenhaber algebra. In this paper, we prove a version of this result for $n$-Lie algebroids by introducing the concept of $n$-Gerstenhaber algebra as an $n$-ary generalization of a Gerstenhaber algebra. Explicitly, we  prove that there is a one-to-one correspondence between the set of $n$-Lie algebroid structures on $A$ and the set of all $n$-Gerstenhaber brackets on the graded commutative associative algebra $(\Gamma (\Lambda^\bullet A), \wedge)$, making it an $n$-Gerstenhaber algebra (cf. Theorem \ref{classification-filippov-algeb}).

Lie algebroid structures on a smooth vector bundle are also related to linear Poisson structures on its dual. We introduce a notion of linear Nambu structure of order $n$ on a smooth vector bundle and show that if a bundle admits such a structure, then its dual admits the structure of an $n$-Lie algebroid (cf. Theorem \ref{filippov-algebroid-linear-nambu-structure}).

The notion of Lie bialgebroid, introduced by Mackenzie and Xu \cite{mac-xu}, is a generalization of both Poisson manifolds and Lie bialgebras. Recall that a Lie bialgebroid is a pair $(A, A^\ast)$ of Lie algebroids in duality, where the Lie bracket of $A$ satisfies a compatibility condition expressed in terms of the differential $d_\ast$ on $\Gamma (\Lambda^\bullet A)$, i.e., 
$$d_\ast[X, Y] = [d_\ast X, Y] + [X , d_\ast Y],$$ 
for all $X,~ Y \in \Gamma A.$ 

It is well known that if $M$ is a Poisson manifold, then the Lie algebroid structures on $TM$ and $T^\ast M$ form a Lie bialgebroid. On the other hand, if $(A, A^\ast)$ is a Lie bialgebroid over a smooth manifold $M,$ then there is a canonical Poisson structure on the base manifold $M$ \cite{kosmann,mac-xu}.

It is natural to ask the following question:\\ 
{\it Does there exist some notion of bialgebroid associated to a Nambu-Poisson manifold of order $n > 2$?}

It is well known \cite{vaisman} that for a Nambu-Poisson manifold $M$ of order $n \geq 2$, the space $\Omega^1(M)$ of $1$-forms admits an $n$-ary bracket, called Nambu form-bracket, such that the bracket satisfies almost all the properties of an $n$-Lie algebra bracket except that the fundamental identity is satisfied only in a restricted sense if $n > 2.$ We summarize the properties of this bracket to observe that the Nambu form-bracket on  $\Omega^1(M)$, together with the usual Lie algebroid structure on $TM$ gives rise to a notion which we call a {\it weak Lie-Filippov algebroid pair of order $n$}, $n>2$ (cf. Definition~\ref{weak-Lie-Filippov-algebroid-pair}). Examples of such structures also arise from certain Lie algebroids equipped with a Nambu structure (cf. Remark \ref{Nambu-Lie-example}).  

We formulate a notion of {\it Nambu-Gerstenhaber algebra of order $n$} and  prove that weak-Lie-Filippov algebroid pair structures of order $n$, $n>2$, on a smooth vector bundle $A$ over $M$, are in bijective correspondence with Nambu-Gerstenhaber brackets of order $n$ on the graded commutative associative algebra $\Gamma (\Lambda^\bullet A^*)$, where $A^*$ is the dual bundle (cf. Definition \ref{nambu-n-gerstenhaber-algebra}, Theorem \ref{classification-weak-lie-filippov-algebroid-pair}).

We observe that for a Nambu-Poisson manifold $M$ of order $n > 2$, the Nambu-\linebreak Gerstenhaber bracket on $\Omega^\bullet(M)$, extending the Nambu form-bracket on $\Omega^1(M)$ satisfies certain suitable compatibility condition similar to the compatibility condition of a Lie\linebreak bialgebroid (cf. Proposition \ref {compatibility-d-n-gerstenhaber}). This motivates us to introduce a notion of a {\it weak Lie-Filippov bialgebroid structure} of order $n$ on a smooth vector bundle. A weak Lie-Filippov bialgebroid of order $n$ is a weak Lie-Filippov algebroid pair structure of order $n$, satisfying a compatibility condition (cf. Definition \ref{defn-weak-lie-filippov-bialgebroid}). Thus, given a Nambu-Poisson manifold $M$ of order $n > 2$, we conclude that the pair $(TM, T^*M)$ is a weak Lie-Filippov bialgebroid of order $n$ on $TM$ (cf. Corollary \ref{NP-manifold-weak-lie-filippov-bialgebroid}). A weak Lie-Filippov bialgebra of order $n$ is a weak Lie-Filippov bialgebroid of order $n$ over a point.  

In \cite{xu}, Xu proved that there is a one-to-one correspondence between Lie bialgebroid structures on a smooth vector bundle $A$ and strong differential Gerstenhaber algebra structures on $\Gamma (\Lambda^\bullet A).$ We introduce an $n$-ary generalization of a strong differential Gerstenhaber algebra and prove (cf. Proposition \ref{classification-weak-lie-filippov-strong-nambu-gerstenhaber}) a version of the above correspondence for weak Lie-Filippov bialgebroid structures of order $n$ on smooth vector bundles.

It is well known \cite{kosmann,mac-xu} that if $(A, A^\ast)$ is a Lie bialgebroid over a smooth manifold $M,$ then there is a canonical Poisson structure on the base manifold $M$. Therefore, it is natural to expect that if $(A, A^\ast)$ is a weak Lie-Filippov bialgebroid over $M$, then $M$ would admit a canonical Nambu-Poisson structure. We prove that this is indeed the case. Thus, if $(A, A^\ast)$ is a weak Lie-Filippov bialgebroid of order $n$ over a smooth manifold $M,$ then there is a canonical Nambu-Poisson structure of order $n$ on $M$ such that the anchor $a : A \to TM$ of the Lie algebroid $A$ is a morphism of weak Lie-Filippov bialgebroids $(A, A^*) \to (TM, T^*M).$ If there is a morphism $(A,A^\ast) \to (B,B^\ast)$ of weak Lie-Filippov bialgebroids, the corresponding induced Nambu-Poisson structures on $M$ are the same (cf. Theorem \ref{weak-lie-filippov-nambu-poisson-on base}).

In \cite{vaisman}, the author introduced Nambu-Lie groups of order $n$ as an $n$-ary generalization of Poisson Lie groups and determined its infinitesimal form. More explicitly, the author proved that if $G$ is a Nambu-Lie group, then the intrinsic derivative at identity $\delta :\mathfrak g \longrightarrow \Lambda^n \mathfrak g$ is a $1$-cocycle in the Chevalley-Eilenberg complex of the Lie algebra $\mathfrak g$ of $ G$ with respect to the adjoint representation on $\Lambda ^n \mathfrak g$. Moreover, the dual of $\delta$ makes $\mathfrak g^*$ a Filippov algebra of order $n$. Such a pair $(\mathfrak g, \delta)$ is known as Filippov-Lie bialgebra of order $n$ \cite{ciccoli}. In this paper, we will refer to it as {\it Lie-Filippov bialgebra} of order $n$, instead. We show that a Lie-Filippov bialgebra of order $n$ may be described alternatively as a Lie algebra $\mathfrak g$ with its dual $\mathfrak g^\ast$ an $n$-Lie algebra such that the $n$-Lie algebra bracket on $\mathfrak g^\ast$ and the Chevalley-Eilenberg coboundary operator of $\mathfrak g$ satisfies a compatibility condition similar to a Lie bialgebra. Finally, we observe that a Lie-Filippov bialgebra of order $n-$the infinitesimal form of a Nambu-Lie group of order $n$ may be viewed as a weak Lie-Filippov bialgebra of order $n$ and hence, yields example of weak Lie-Filippov bialgebroid of order $n.$  

\begin{remark}\label{terminology-weak}
As one might expect, a proper generalization of Lie-Filippov bialgebra of order $n$ should be a notion of {\it Lie-Filippov bialgebroid} of order $n$. Unfortunately, we do not have any example yet. In this paper, our focus is on Nambu structures on manifolds. Since Nambu structures of order $n~~(n > 2)$ are known to be more rigid than Poisson structures, we end up getting a weaker notion of bialgebroid. This explains the terminology that we use in this paper. 
\end{remark}
Following are some remarks about the relevance of the contents of the present work in mathematical physics.  
\begin{remark}\label{math-phys}
\begin{enumerate}
\item The Lie groupoids and their infinitesimal objects Lie algebroids provide an ideal framework to study internal and external symmetries of both classical and quantum mechanics. Lie algebroids and Lie bialgebroids appear naturally from Poisson manifolds which are known to be the phase spaces for Hamiltonian mechanics. For example, in \cite{etingof-verchenko}, it is shown that Poisson groupoid provides an appropriate general framework in which to study the classical dynamical Yang-Baxter equation. On the other hand, the infinitesimal form of Poisson groupoid, called Lie bialgebroid introduced in \cite{mac-xu} is an important algebraic concept in Mathematical Physics. We refer \cite{landsman} for an outline of the role played by Lie groupoids and Lie algebroids in physics. 

\item Dirac structure is an important ingredient in physics and has a key role to play in classical mechanics (cf. \cite{mukunda-sudarshan}). In \cite{courant}, T. J. Courant studied Dirac structure on manifolds. It is known that given a Lie bialgebroid $(A, A^\ast)$ over a smooth manifold $M,$ the direct sum bundle $A\oplus A^\ast$ carries a Courant algebroid structure, called the Drienfel'd double of the Lie bialgebroid $(A, A^\ast)$ \cite{liu-weinstein-xu}. The same authors also studied Dirac structure on an arbitrary Courant algebroid generalising the notion of Dirac manifold. For example, if we consider the Lie bialgebroid $(TM, T^\ast M)$ associated to a Poisson manifold $(M, \pi)$ then 
$$L : = \mbox{Graph of $\pi^\sharp$} = \{ (\pi^\sharp\alpha, \alpha)|\alpha \in T^\ast M\} \subset TM \oplus T^\ast M$$ defines a Dirac structure on the Courant algebroid $TM \oplus T^\ast M.$ 

\item The phase spaces of Nambu mechanics are Nambu-Poisson manifolds, just like Poisson manifolds are for Hamiltonian mechanics. Therefore, the study of various structures and invariants associated to Nambu-Poisson manifolds would lead to a deeper understanding of Nambu mechanics. 
\end{enumerate}

\end{remark}

{\bf Organization of the paper}: In \S \ref{$2$}, we recall some basic definitions and fix notations. In \S \ref{$3$}, we introduce the notion of $n$-Gerstenhaber algebra and prove Theorem \ref{classification-filippov-algeb}. In \S \ref{$4$}, we introduce linear Nambu structures of order $n$ on a smooth vector bundle $A$ as a generalization of linear Poisson structures and show that if $A$ is equipped with such a structure, then the dual bundle $A^*$ is an $n$-Lie algebroid (Theorem \ref{filippov-algebroid-linear-nambu-structure}). In \S \ref{$5$}, we introduce the notion of weak Lie-Filippov algebroid pair structure of order $n\geq 3$ on a smooth vector bundle. In order to characterize such structures we introduce the notion of Nambu-Gerstenhaber algebra of order $n$ and prove that weak Lie-Filippov algebroid pair structures of order $n\geq 3$ on a smooth vector bundle correspond to Nambu-Gerstenhaber algebras. Moreover, we prove that if $M$ is a Nambu-Poisson manifold of order $n$ ($n\geq 3$), then the pair $(TM, T^*M)$ forms a weak Lie-Filippov algebroid pair of order $n$. In \S \ref{$6$}, we introduce the notion of weak Lie-Filippov bialgebroid of order $n$, strong Nambu-Gerstenhaber algebra of order $n$, characterize weak Lie-Filippov bialgebroid structures of order $n$ on a smooth vector bundle and prove Theorem \ref{weak-lie-filippov-nambu-poisson-on base}. Finally, in \S \ref{$7$}, we establish a connection between weak Lie-Filippov bialgebroids of order $n$ and Nambu-Lie groups of order $n$. The reader may find the following two schematic diagrams useful.\\
\tikzstyle{block}=[draw,rectangle,text width=10em,text centered, minimum height=15mm,node distance=7em]
\begin{center}
\begin{tikzpicture}[
implies/.style={double,double equal sign distance,-implies},]
\node[block](nLie){$n$-Lie algebroid};
\node[block, below of =nLie](nGer){$n$-Gerstenhaber algebra};
\node[block, right of =nLie, xshift=8em](linNam){Linear Nambu structure of order $n$};
\draw (linNam) edge[implies] (nLie);
\draw[implies-implies,double equal sign distance](nGer) to (nLie);
\end{tikzpicture}
\end{center}

\begin{center}
\begin{tikzpicture}[
implies/.style={double,double equal sign distance,-implies},]
\node[block](WLFadp){Weak Lie-Filippov algebroid pair };
\node[block, below of =WLFadp](NG){Nambu-Gerstenhaber algebra};
\node[block, right of =NG, xshift=6em](SNG){Strong Nambu-Gerstenhaber algebra};
\node[block, right of =WLFadp, xshift=6em](WLFbd){Weak Lie-Filippov bialgebroid (e.g. Nambu-Poisson manifold)};
\node[block, right of =WLFbd, xshift=8em](WLFb){Weak Lie-Filippov bialgebra};
\node[block, above of =WLFb](LFb){Lie-Filippov bialgebra (e.g. Lie algebra of a Nambu-Lie group)};
\draw[implies-implies,double equal sign distance](WLFadp) to (NG);
\draw[implies-implies,double equal sign distance](WLFbd) to (SNG);
\draw (WLFbd) edge[implies] (WLFadp);
\draw (SNG) edge[implies] (NG);
\draw (LFb) edge[implies] (WLFb);
\draw (WLFbd) edge[->] node[above] {over a pt} (WLFb);
\end{tikzpicture}
\end{center}
%

\vspace{0.5cm}
\section{Preliminaries}\label{$2$}

In this section, we recall some definitions, notations and results. Most of these ideas are from \cite{filippov}, \cite{gra-mar} and \cite{mac}. To simplify notation, we will use either the symbol $\{~, \ldots ,~\}$ or $[~, \ldots, ~]$ to denote an $n$-ary bracket ($n \geq 2$) in various context without causing any confusion. 

Let $\mathfrak{X}(M)$ denote the space of smooth vector fields on a smooth manifold $M$. The notion of a Lie algebroid is a generalization of Lie algebras and tangent bundle of a smooth   manifold. 
\begin{defn}
A {\it Lie algebroid} is a smooth vector bundle $p:A\rightarrow M$ equipped with \\
(i) a Lie algebra structure $[~, ~]$ on the space of $\Gamma A$ of smooth sections of $p$;\\
(ii) a vector bundle map $a : A \rightarrow TM $ over $M$ (called the anchor) such that the induced $C^\infty(M)$-linear map $ a : \Gamma A \rightarrow \mathfrak{X}(M)$ is a map of Lie algebras and satisfies the derivation law $ [X, fY] = f[X, Y] + a (X)(f)Y$ for all $X, Y \in \Gamma A$ and $f \in C^\infty(M).$
\end{defn}

Recall the definition of cohomology of a Lie algebroid with coefficients in a given representation $\nabla$ on a vector bundle $E$. 

\begin{defn}\label{cohomology-lie-algd}
The cohomology of $A$ with coefficients in $\nabla,$ denoted $\HH^\bullet(A,\nabla),$ is the cohomology of the cochain complex $\{\Gamma (\Lambda^\bullet A^\ast\otimes E),d_A\},$ where the differential is given by 
\begin{multline*}
 d_A\phi(X_1,\ldots, X_{k+1})=
\displaystyle{\sum_{r=1}^{k+1}}(-1)^{r+1}\nabla_{X_r}(\phi(X_1,\ldots,X_{r-1},\widehat{X_r},X_{r+1},\ldots,X_{k+1}))\\+
\displaystyle{\sum_{1\leq r<s\leq k+1}}(-1)^{r+s}\phi([X_r,X_s],X_1,\ldots,\widehat{X_r},\ldots,\widehat{X_s},\ldots,X_{k+1}).
\end{multline*}
\end{defn}
In particular when $E= M\times \mathbb R$ and $\nabla = a,$ the anchor of $A$ (called the trivial representation), the above cochain complex is denoted by $\{\Gamma (\Lambda^k A^\ast), d_A\}_{k\geq 0}$ and the corresponding cohomology by $\mathcal{H}^\bullet(A).$

Note that when $A$ is the tangent bundle Lie algebroid, then the coboundary with trivial representation is precisely the de Rham cohomology operator and the corresponding cohomology is the de Rham cohomology, whereas, for $M$ a point space, the cohomology reduces to the Chevalley-Eilenberg cohomology of a Lie algebra.

Recall that a Gerstenhaber (also known as Schouten) algebra is defined as follows. For a graded object  $\mathcal A =\oplus_{i\in \mathbb Z}\mathcal A ^i $, if $a \in  \mathcal A ^i,$ then $|a|$ denotes the degree $i$ of $a$.

\begin{defn}\label{gerstenhaber-algebra}
Let $R$ be a ring and let $C$ be an $R$-algebra. A {\it Gerstenhaber algebra} over $C$ is a $\mathbb Z$-graded commutative associative $C$-algebra $(\mathcal A =\oplus_{i\in \mathbb Z}\mathcal A ^i, \wedge) $ endowed with a $R$-bilinear bracket (called Schouten bracket  or Gerstenhaber bracket)
$$[~ ,~ ] : \mathcal{A}^i \otimes \mathcal{A}^j \to \mathcal{A}^{i+j-1}$$
satisfying
\begin{enumerate}
\item $[a, b] = -(-1)^{(|a|-1)(|b|-1)}[b, a],$
\item $(-1)^{(|a|-1)(|c|-1)}[[a,b],c] + (-1)^{(|b|-1)(|a|-1)}[[b,c],a] + (-1)^{(|c|-1)(|b|-1)}[[c,a],b] =0,$
\item $[a, b\wedge c] = [a, b] \wedge c + (-1)^{(|a|-1)|b|} b \wedge [a, c].$ 
\end{enumerate}
\end{defn}

\begin{remark}\label{condition-2-gerstenhaber-algebra}
Note that in the presence of the condition (1), the condition (2) of the above definition may be re-written as 
$$[a, [b, c]]= [[a, b], c] + (-1)^{(|a|-1)(|b|-1)}[b, [a, c]].$$ 
\end{remark}

For each fixed $a \in \mathcal A^{|a|}$, $[a,~]$ is a derivation with respect to $\wedge$ of degree $(|a|-1)$. For the relabeling of $\mathcal A$ given by $\mathcal A^{(i)} = \mathcal A^{i+1},$ a Gerstenhaber algebra $\mathcal A$ may be viewed as a graded Lie algebra.  

It is a well known fact that given  a Lie algebroid $A$ over a smooth manifold $M$, the algebra $\Gamma(\Lambda^\bullet  A)= \oplus_k\Gamma(\Lambda^k A)$  of multisections endowed with the Schouten bracket  $[~, ~]$ is a Gerstenhaber algebra \cite{mac}. Moreover, the collection of Lie algebroid structures on a smooth vector bundle $A\rightarrow M$ is in one-to-one correspondence with the set of all Gerstenhaber brackets on the graded commutative associative algebra $(\Gamma (\Lambda^\bullet A), \wedge)$ making it a Gerstenhaber algebra.

Recall that the $n$-ary generalization of Lie algebras are called $n$-Lie algebras or Filippov algebras of order $n \geq 2$. 

\begin{defn}\label{n-lie-algebra}
An {\it $n$-Lie algebra} (or a {\it Filippov algebra of order $n$})  is a vector space $\mathcal B$ over $\mathbb R$, together with a $\mathbb R$-multilinear map
$[~,\ldots,~] \colon \underbrace{\mathcal B \times \cdots \times  \mathcal B}_{n\,\textup{times}} \longrightarrow \mathcal B$ 
which is skew-symmetric, i.e., $[a_{\sigma(1)},\ldots,a_{\sigma(n)}] = \textup{sign}(\sigma) [a_1,\ldots,a_n]$ for $\sigma\in\Sigma_n$ and satisfies the generalized Jacobi identity (also called the fundamental identity)
$$[a_1, \ldots, a_{n-1},[b_1, \ldots , b_n]] = \sum_{i=1}^n[ b_1, \ldots, b_{i-1},[a_1,\ldots, a_{n-1},b_i], \ldots,b_n],$$ for all $a_i,~ b_j \in \mathcal B$. 
\end{defn}

Like Lie algebroid we have the notion of an $n$-Lie algebroid defined as follows:
\begin{defn} \label{n-lie-algebroid} 
Let $M$ be a smooth manifold. An {\it{$n$-Lie algebroid}} (or a {\it Filippov algebroid of order $n$}) over $M$, is a smooth vector bundle $p:A \rightarrow M$ such that 
(i) the space of smooth sections $\Gamma A$ admits an $n$-bracket $[~, \ldots ,~]$ which makes $(\Gamma A,  [~, \ldots ,~])$ an $n$-Lie algebra;\\
(ii) there exists a vector bundle morphism $\rho\colon \Lambda^{n-1}A \rightarrow TM$ over $M$ (called the {\it anchor}) satisfying the following conditions:
\begin{displaymath}
[\rho(X_1\wedge \cdots \wedge X_{n-1}), \rho(Y_1\wedge \cdots \wedge Y_{n-1})] = \sum_{i=1}^{n-1}\rho(Y_1\wedge \cdots \wedge [X_1, \ldots , X_{n-1}, Y_i]\wedge \cdots \wedge Y_{n-1});
\end{displaymath}
$$[X_1,\ldots, X_{n-1},fY]= f[X_1,\ldots, X_{n-1}, Y] + \rho(X_1\wedge \cdots \wedge X_{n-1})(f)Y;$$
for all $X_1, \ldots, X_{n-1}, Y_1, \ldots, Y_{n-1}, Y \in \Gamma A$ and $f \in C^{\infty}(M).$
\end{defn}
\noindent Clearly, any $n$-Lie algebra may be considered as an $n$-Lie algebroid over a point with zero anchor. 
\begin{exam}\label{exam-dual-nambu}(\cite{gra-mar}
The tangent bundle $T \mathbb{R}^m \rightarrow \mathbb{R}^m$ has a structure of Filippov algebroid of order $(n+1)$ $(n \leq m)$ uniquely determined by
$$ \big[\frac{\partial}{\partial x_{i_1}}, \ldots, \frac{\partial}{\partial x_{i_{n+1}}} \big] =0 ,$$
and anchor $\bigwedge^{n}T\mathbb{R}^m \rightarrow T \mathbb{R}^m$ is given by the tensor field
$dx_1 \wedge \cdots \wedge dx_n \otimes \frac{\partial}{\partial x_1}$.
\end{exam}

More examples of Filippov algebroids may be found in \cite{gra-mar}.

Recall that a {\it Nambu-Poisson} manifold is a generalization of the notion of {\it Poisson} manifolds and is defined as follows \cite{vaisman}. 
\begin{defn}\label{nambu-poisson manifold}
Let $M$ be a smooth manifold.  A  {\it{Nambu-Poisson bracket}} of order $n$ ($2\leq n \leq ~\mbox{dim}~ M$) on $M$ is an $n$-multilinear
mapping 
$$\{,\ldots,\}\colon C^\infty(M)\times \cdots\times C^\infty(M)
\longrightarrow C^\infty(M)$$ satisfying the following conditions: 
\begin{enumerate}
\item Skew-symmetric: $ \{f_1,\ldots,f_n\}= \textup{sign}(\sigma)\{f_{\sigma(1)},\ldots,f_{\sigma(n)}\}$
for any $\sigma\in \Sigma_n;$
\item Leibniz rule: $\{fg,f_2,\ldots,f_n\}= f\{g ,f_2,\ldots,f_n\}+ g\{f, f_2, \ldots,f_n\};$
\item Fundamental identity: $$\{f_1,\ldots,f_{n-1},\{g_1,\ldots,g_n\}\}=
\displaystyle{\sum_{i=1}^n}\{g_1,\ldots, g_{i-1}, \{f_1,\ldots,f_{n-1},g_i\},\ldots,g_n\}$$
\end{enumerate}
for $f_i,g_j,f, g\in C^\infty(M).$ The pair $(M, \{~, \ldots, ~\})$ is called a {\it{Nambu-Poisson manifold of order $n$}}.  
\end{defn}
\begin{remark}
Note that Poisson manifolds are Nambu-Poisson manifolds of order $2$ \cite {book-vaisman}.  
\end{remark}
\noindent Given a Nambu-Poisson bracket on $M$, there exists an $n$-vector field $P\in\Gamma (\Lambda^n TM),$ called the Nambu-Poisson tensor corresponding to the given bracket, and is defined by $P(df_1,\ldots,df_n)=\{f_1,\ldots,f_n\},$ for $f_1,\ldots,f_n\in C^\infty(M)$. Note that $P$ induces a bundle map 
\begin{align}\label{anchor-NP-manifold}
& P^\sharp: \Lambda^{n-1}T^\ast M\rightarrow TM~~\vspace{5cm}\text{given by}\vspace{5cm} ~~\big\langle \beta, P^\sharp (\alpha_1 \wedge \cdots \wedge \alpha_{n-1})\big\rangle = P (\alpha_1, \ldots , \alpha_{n-1}, \beta),
\end{align}
for all $\alpha_1, \ldots , \alpha_{n-1}, \beta \in \Omega^1(M).$

Given any $(n-1)$ functions $ f_1, \ldots , f_{n-1} \in C^\infty(M),$ the Hamiltonian vector field $X_{f_1 \ldots f_{n-1}}$ associated to these functions is defined by 
$X_{f_1 \ldots f_{n-1}} = P^\sharp (df_1 \wedge \cdots \wedge df_{n-1}).$ In terms of Hamiltonian vector fields, the fundamental identity can be expressed as 

\begin{align}\label{expression-fundamental-id-hamiltonian}  
& [X_{f_1 \ldots f_{n-1}}, X_{g_1 \cdots g_{n-1}}] = \sum_{i=1}^{n-1} X_{g_1 \cdots\{f_1, \ldots , f_{n-1}, g_i\}\cdots g_{n-1}},
\end{align}
for $f_1, \ldots , f_{n-1}, g_1, \ldots , g_{n-1} \in C^\infty(M),$ where the bracket on the left hand side is the Lie bracket of vector fields.
\vspace{0.5cm}
\section{Filippov algebroid and higher order Gerstenhaber algebra}\label{$3$} 

Given  a Lie algebroid $A$ over a smooth manifold $M,$ the algebra $\Gamma(\Lambda^\bullet  A)= \oplus_k\Gamma(\Lambda^k A)$  of multisections endowed with the generalized Schouten bracket  $[~, ~]$ is a Gerstenhaber algebra \cite{mac}. Moreover, the collection of Lie algebroid structures on a smooth vector bundle $A\rightarrow M$ is in one-to-one correspondence with the set of all Gerstenhaber brackets on the graded commutative associative algebra $(\Gamma (\Lambda^\bullet A), \wedge)$ making it a Gerstenhaber algebra. The aim of this section is to prove a version of this result for  Filippov algebroids.

We need a graded version of Filippov algebra of order $n$. 

\begin{defn}\label{graded n-lie algebra}
A {\it graded Filippov algebra} of order $n$  is a $\mathbb Z$-graded real vector space $\mathcal B = \oplus_{i\in \mathbb Z}\mathcal B^i$ equipped with a multilinear bracket
$ [~, \ldots, ~ ]: \mathcal B\times \cdots\times \mathcal B \longrightarrow \mathcal B $ satisfying the following conditions:
\begin{enumerate}
\item $[\mathcal B^{i_1}, \ldots, \mathcal B^{i_n}] \subseteq \mathcal B^{i_1+ \cdots +i_n};$
\item Graded anti-commutativity:
$$[a_1, \ldots , a_i, a_{i+1}, \ldots , a_n]  = -(-1)^{|a_i||a_{i+1}|} [a_1, \ldots , a_{i+1}, a_i, \ldots , a_n],~ a_i \in \mathcal B^{|a_i|};$$
\item Graded fundamental identity:
$$[a_1, \ldots, a_{n-1},[b_1, \ldots , b_n]] = \sum_{i=1}^n[ b_1, \ldots, b_{i-1},[a_1,\ldots, a_{n-1},b_i], \ldots,b_n],$$ where $a_1, a_2, \ldots , a_{n-1} \in \mathcal B^0$ and $b_k \in \mathcal B ^{i_k}$ are homogeneous elements.
\end{enumerate}
\end{defn}
Note that if $\mathcal B$ is a graded Filippov algebra of order $n$, then the bracket $[~, \ldots, ~ ]$ restricted to  $\mathcal B^0$, makes it a Filippov algebra of order $n$.
Next, we introduce a natural $n$-ary generalization of Gerstenhaber algebras, which we call {\it $n$-Gerstenhaber algebras}.

\begin{defn}\label{n-gerstenhaber algebra}
Let $R$ be a ring and $C$ be an $R$-algebra. An {\it $n$-Gerstenhaber algebra} over $C$ is a $\mathbb Z$-graded commutative associative $C$-algebra $(\mathcal A =\oplus_{i\in \mathbb Z}\mathcal A ^i, \wedge) $ with
an $R$-multilinear operation
$$ [~, \ldots , ~] : \mathcal A \times \cdots \times \mathcal A \longrightarrow \mathcal A, ~~ (a_1, \ldots , a_n) \mapsto [a_1, \ldots , a_n],$$ satisfying
\begin{itemize}
\item[(i)] $[a_1, \ldots , a_n] \in \mathcal A^{|a_1|+ \cdots + |a_n| - (n-1)};$
\item[(ii)] $[a_1, \ldots , a_i, a_{i+1}, \ldots,  a_n]= -(-1)^{(|a_i|-1)(|a_{i+1}|-1)} [a_1, \ldots , a_{i+1}, a_i, \ldots,  a_n],$ for all $a_i \in \mathcal A ^{|a_i|}$;
\item[(iii)]  $[a_1, \ldots,  a_{n-1}, [b_1, \ldots , b_n]] 
= \sum _{i=1}^n [b_1, \ldots ,b_{i-1}, [ a_1, \ldots , a_{n-1}, b_i], \ldots , b_n],$\\
for all $a_1, a_2, \ldots , a_{n-1} \in \mathcal A^1$ and $b_k \in \mathcal A^{|b_k|}$;
\item[(iv)] for all  $a_i \in \mathcal A^{|a_i|}$, $b \in \mathcal A^{|b|}$, and $c \in \mathcal A$
$$[a_1, \ldots , a_{n-1}, b\wedge c] = [a_1, \ldots , a_{n-1}, b] \wedge c+(-1)^{ [(|a_1|-1) + \cdots + (|a_{n-1}|-1)]|b|}b\wedge [a_1, \ldots , a_{n-1}, c].$$

\end{itemize}
\end{defn}

The $n$-linear operation in the above Definition will be called an {\it $n$-Gerstenhaber bracket}. 

\begin{remark}
An $n$-Gerstenhaber algebra consists of a triple $(\mathcal A =\oplus_{i\in \mathbb Z}\mathcal A^i, \wedge, [~, \ldots ,~])$ such that 
\begin{enumerate}
\item $(\mathcal A , \wedge)$ is a graded commutative, associative algebra;
\item $(\mathcal A = \oplus_{i\in \mathbb Z}\mathcal A^{(i)}, [~ ,\ldots , ~])$  with $\mathcal A^{(i)} = \mathcal A^{i+1},$ is a graded Filippov algebra of order $n$;
\item for each fixed $a_i \in \mathcal A^{|a_i|},~~ i = 1, 2, \ldots, n-1,$ $[a_1, \ldots , a_{n-1}, ~]$ is a derivation with respect to $\wedge$ of degree $(|a_1|-1) + \cdots +(|a_{n-1}|-1).$ 
\end{enumerate}
\end{remark}
\begin{exam}\label{lie-n-gerstenhaber}
Let $(\mathcal B, [~, \ldots ,~])$ be a Filippov algebra of order $n$. The commutative, associative graded algebra $\Lambda^{\bullet}\mathcal B$ (with respect to the standard wedge product $\wedge$) may be given an $n$-Gerstenhaber algebra structure extending the given bracket defined as follows:
$$ [a_1, \ldots , a_n] = 0, ~~ \mbox{if some}~~ a_j \in \Lambda^0\mathcal B = \mathbb R,$$ 
$$ [a_1, \ldots , a_n] ~~\mbox{ is the given bracket if}~|a_i|=1~\textup{for all}~i.$$ 
Extend the bracket on $\Lambda^\bullet \mathcal B$ uniquely by using the conditions (ii) and (iv) of Definition \ref{n-gerstenhaber algebra}.  
\end{exam}

Note that for a vector bundle $A$ over $M$, the graded $C^\infty(M)$-module  
$$\Gamma (\Lambda^\bullet A) = \oplus_{k\geq 0}\Gamma(\Lambda^k A)$$ is a graded commutative associative algebra with respect to the wedge product of multisections. We have the following result.

\begin{thm}\label{classification-filippov-algeb}
Let $A$ be a smooth vector bundle over $M$. There is a one-to-one correspondence between the set of $n$-Lie algebroid structures on $A$ and the set of all $n$-Gerstenhaber brackets on the graded commutative associative algebra $(\Gamma (\Lambda^\bullet A), \wedge)$ making it an \linebreak$n$-Gerstenhaber algebra.
\end{thm}

\begin{proof}  Let $A \rightarrow M$ be an $n$-Lie algebroid and $ \rho : \Lambda^{n-1}A\to TM$ be the associated anchor map.  We extend the $n$-Lie algebra bracket $[~, \ldots ,~]$ on $\Gamma A = \Gamma (\Lambda^1 A)$ to an $n$-Gerstenhaber bracket $[~, \ldots ,~]$ on $\Gamma (\Lambda^\bullet A) $ as follows. For $X_1, ~X_2, \ldots , X_{n-1} \in \Gamma A$ and $f \in C^\infty(M) = \Gamma(\Lambda^0A),$
$$ [X_1, X_2, \ldots , X_{n-1}, f] : = \rho(X_1\wedge \cdots \wedge X_{n-1})(f).$$
Using the conditions (ii) and (iv) of Definition \ref{n-gerstenhaber algebra}, extend  the bracket $[~, \ldots ,~]$ to all of $\Gamma (\Lambda^\bullet A)$. Observe that the extended bracket on multisections depends only on the local behaviour of the entries. Therefore, the extension is well-defined in the sense that it is independent of concrete realizations of the entries as wedge products. It remains to show that this extended bracket on $\Gamma (\Lambda^\bullet A)$ satisfies condition (iii) of Definition \ref{n-gerstenhaber algebra}.

For $X_1, \ldots X_{n-1}, Y_1, \ldots Y_n \in \Gamma A$ we have the identity
$$[X_1, \ldots, X_{n-1},[Y_1, \ldots , Y_n]] = \sum_{i=1}^n[ Y_1, \ldots, Y_{i-1},[X_1,\ldots, X_{n-1}, Y_i], \ldots,Y_n].$$
Let $f\in C^{\infty}(M)$. From condition (ii) of Definition \ref{n-lie-algebroid} we get
\begin{align*}
&[\rho(X_1\wedge \cdots \wedge X_{n-1}), \rho(Y_1\wedge \cdots \wedge Y_{n-1})](f)\\ 
&= \sum_{i=1}^{n-1}\rho (Y_1 \wedge \cdots \wedge [X_1, \ldots , X_{n-1}, Y_i]\wedge \cdots \wedge Y_{n-1})(f)\\
&= \sum_{i=1}^{n-1}[ Y_1, \ldots, Y_{i-1}, [X_1, \ldots , X_{n-1}, Y_i], \ldots ,Y_{n-1}, f].
\end{align*}

Using the definition of Lie brackets of vector fields we deduce
\begin{align*}
& [X_1, \ldots, X_{n-1}, [Y_1, \ldots, Y_{n-1}, f]]\\
&= \sum_{i=1}^{n-1}[ Y_1, \ldots, Y_{i-1}, [X_1, \ldots , X_{n-1}, Y_i], \ldots ,Y_{n-1}, f] + [Y_1, \ldots, Y_{n-1}, [X_1, \ldots, X_{n-1}, f]].
\end{align*}
Thus, by skew-symmetry, we can say that the property (iii) of Definition \ref{n-gerstenhaber algebra} holds if one of entry $Y_i =f \in C^{\infty}(M) = \Gamma (\Lambda^0A)$.

Let the identity hold for any multisections $Y_j \in \Gamma (\Lambda^{m_j}A)$, $j= 1,2, \ldots , n.$ In other words, for any $X_1, \ldots, X_{n-1} \in \Gamma A$ and $Y_j \in \Gamma (\Lambda^{m_j}A)$, $j= 1,2, \ldots , n$, we assume 
\begin{equation*}
[X_1, \ldots, X_{n-1}, [Y_1, \ldots, Y_{n-1}, Y_n]] = \sum_{i=1}^n[ Y_1, \ldots, Y_{i-1}, [X_1, \ldots , X_{n-1}, Y_i], \ldots , Y_{n-1}, Y_n].
\end{equation*}
We claim that for any $Y\in \Gamma A,$ the quantity $[X_1, \ldots, X_{n-1}, [Y_1, \ldots, Y_{n-1}, Y_n\wedge Y]]$ equals 
$$\sum_{i=1}^{n-1}[ Y_1, \ldots, Y_{i-1}, [X_1, \ldots , X_{n-1}, Y_i], \ldots , Y_{n-1}, Y_n\wedge Y] + [Y_1, \ldots , Y_{n-1}, [X_1, \ldots , X_{n-1}, Y_n \wedge Y]].$$

To prove our claim, we compute either side of the above equality and show that they are equal. Let $Y\in \Gamma A$ and $\theta = ((m_1-1)+ \cdots + (m_{n-1}-1))m_n.$ Applying condition (iv) of Definition \ref{n-gerstenhaber algebra} repeatedly, we get
\begin{align*}
& [X_1, \ldots, X_{n-1}, [Y_1, \ldots, Y_{n-1}, Y_n\wedge Y]]\\
&= [X_1, \ldots , X_{n-1}, [Y_1, \ldots, Y_{n-1}, Y_n]]\wedge Y + [Y_1, \ldots, Y_{n-1}, Y_n]\wedge [X_1, \ldots , X_{n-1}, Y]\\
&+ (-1)^\theta [X_1, \ldots , X_{n-1}, Y_n]\wedge [Y_1, \ldots, Y_{n-1}, Y] +  (-1)^\theta Y_n \wedge [X_1, \ldots , X_{n-1}, [Y_1, \ldots, Y_{n-1}, Y]]. 
\end{align*}
On the other hand,
\begin{align*}
& \sum_{i=1}^{n-1}[ Y_1, \ldots, Y_{i-1}, [X_1, \ldots , X_{n-1}, Y_i], \ldots , Y_{n-1}, Y_n\wedge Y]\\ 
&+ [Y_1, \ldots , Y_{n-1}, [X_1, \ldots , X_{n-1}, Y_n \wedge Y]]\\
&= \sum_{i=1}^{n-1}[ Y_1, \ldots, Y_{i-1}, [X_1, \ldots , X_{n-1}, Y_i], \ldots , Y_{n-1}, Y_n]\wedge Y\\
&+ \sum_{i=1}^{n-1}(-1)^\theta Y_n \wedge [ Y_1, \ldots, Y_{i-1}, [X_1, \ldots , X_{n-1}, Y_i], \ldots , Y_{n-1}, Y]\\
&+ [Y_1, \ldots , Y_{n-1}, [X_1, \ldots , X_{n-1}, Y_n]] \wedge Y +(-1)^\theta [X_1, \ldots , X_{n-1}, Y_n]\wedge [Y_1, \ldots , Y_{n-1}, Y]\\
&+ [Y_1, \ldots , Y_n] \wedge [X_1, \ldots , X_{n-1}, Y] +(-1)^\theta Y_n \wedge [Y_1, \ldots , Y_{n-1}, [X_1, \ldots , X_{n-1}, Y]].
\end{align*}
Comparing terms from the above computations and using the assumption establishes our claim. By induction we may now conclude that the extended bracket satisfies condition (iii) of Definition \ref{n-gerstenhaber algebra}. Hence, $\Gamma (\Lambda^\bullet A) $ has an $n$-Gerstenhaber algebra structure.

Conversely, assume that there is an $n$-Gerstenhaber bracket $[~, \ldots ,~]$ on the graded commutative associative algebra $(\Gamma (\Lambda^\bullet A), \wedge) $ which makes it an $n$-Gerstenhaber algebra. The restriction of this bracket to $\Gamma A$ makes it an $n$-Lie algebra. Let $X_1, ~ \ldots , X_{n-1} \in \Gamma A$ and $f,~g \in C^\infty(M) = \Gamma (\Lambda^0A).$ Observe that by condition (iv) of Definition \ref{n-gerstenhaber algebra}, we have
$$[X_1, \ldots , X_{n-1}, fg] = [X_1, \ldots , X_{n-1}, f]g + f[X_1, \ldots , X_{n-1}, g].$$
Thus, $[X_1, \ldots , X_{n-1}, ~]$ defines a vector field on $M$ given by $ f \mapsto [X_1, \ldots , X_{n-1}, f].$
Hence, we have a linear map 
$ \rho : \Gamma (\Lambda^{n-1} A) \longrightarrow \mathfrak{X}(M),\,\, \rho(X_1 \wedge \cdots \wedge X_{n-1}) = [X_1, \ldots , X_{n-1}, ~].$ 
Note that $\rho(gX_1 \wedge \cdots \wedge X_{n-1})(f) = [X_1, \ldots ,gX_{n-1}, f] = -[X_1, \ldots , X_{n-2}, f, gX_{n-1}]\\
= - [X_1, \ldots , X_{n-2}, f, g]\wedge X_{n-1} - g [X_1, \ldots , X_{n-2}, f, X_{n-1}]
=   g\rho(X_1 \wedge \cdots \wedge X_{n-1})(f).$\\
Therefore, $\rho$ is $C^\infty(M)$-linear. Hence, we may view $\rho :\Lambda^{n-1}A \longrightarrow TM$ as a vector bundle morphism. Let $X_1, \ldots, X_{n-1}, Y_1, \ldots , Y_{n-1} \in \Gamma A$ and $f \in C^\infty (M).$ Then 
\begin{align*}
& [\rho(X_1 \wedge \cdots \wedge X_{n-1}), \rho(Y_1 \wedge \cdots \wedge Y_{n-1})](f)\\
&= [ X_1, \ldots , X_{n-1}, [Y_1, \ldots , Y_{n-1}, f] ] -[ Y_1, \ldots , Y_{n-1}, [X_1, \ldots , X_{n-1}, f] ]\\
&= \sum_{i=1}^{n-1}\rho(Y_1 \wedge \cdots \wedge Y_{i-1} \wedge [X_1, \ldots , X_{n-1}, Y_i]\wedge \cdots \wedge Y_{n-1})(f).
\end{align*}
Thus, 
\begin{align*}
& [\rho(X_1 \wedge \cdots \wedge X_{n-1}), \rho(Y_1 \wedge \cdots \wedge Y_{n-1})]\\
& = \sum_{i=1}^{n-1}\rho(Y_1 \wedge \cdots \wedge Y_{i-1} \wedge [X_1, \ldots , X_{n-1}, Y_i]\wedge \cdots \wedge Y_{n-1}).
\end{align*}

For $Y \in \Gamma A$, 
\begin{align*}
& [X_1, \ldots, X_{n-1}, fY]\\
& = f [X_1, \ldots, X_{n-1}, Y] + [X_1, \ldots, X_{n-1}, f] \wedge Y\\
& = f [X_1, \ldots, X_{n-1}, Y]  + \rho(X_1 \wedge \cdots X_{n-1})(f)Y.
\end{align*}
Therefore, $(A, [~, \ldots, ~], \rho)$ forms an $n$-Lie algebroid.
\end{proof}

\vspace{0.5cm}
\section{Linear Nambu structure}\label{$4$} 
It is well known \cite{mac} that there is a one-to-one correspondence between Lie algebroid structures on a smooth vector bundle $A$ over a smooth manifold $M$ and linear Poisson structures on the dual bundle $A^*.$  In this section, we introduce a notion of  {\it linear Nambu structure} on a smooth vector bundle $A$ and prove that if $A$ is equipped with a linear Nambu structure of order $n,$ then its dual bundle $A^*$ is a Filippov algebroid of order $n$.

Let $p: A \longrightarrow M$ be a smooth vector bundle. Suppose $p^*: A^* \longrightarrow M$ is the dual bundle. Observe that a section $\alpha \in \Gamma A^*$ determines a fibre-wise linear function on $A$, denoted by $l_{\alpha}m,$ and is defined by $$l_{\alpha}(a_m)= \alpha (m)(a_m), ~~a_m \in A_m,~ m\in M.$$ A basic function on $A$ is a function of the type $f \circ p$ where $f \in C^\infty(M).$ For brevity, we refer to fibre-wise linear functions on $A$ as linear functions.

\begin{defn}\label{linear-n-vector-field}
Let $p: A \longrightarrow M $ be a smooth vector bundle. An $n$-vector field $\Pi \in \mathcal{X}^n (A)$ is called {\bf fibrewise linear} or {\bf linear} $n$-vector field on $A$
if the induced $n$-ary bracket on $C^\infty(A)$ satisfies the following properties.
\begin{itemize}
\item[(i)] The bracket of $n$ linear functions is again a linear function.
\item[(ii)] The bracket of $(n-1)$ linear functions and a basic function is a basic function.
\item[(iii)] The bracket of functions is zero if there are more than one basic functions as entries.
\end{itemize}
\end{defn}

\begin{defn}\label{linear-nambu-structure}
Let $p: A \longrightarrow M $ be a smooth vector bundle. A Nambu structure of order $n$ on the total space $A$ is called {\bf fibrewise linear Nambu structure} or {\bf linear Nambu structure} of order $n$ if the Nambu tensor is linear.
\end{defn}

\begin{exam}\label{exam-lin-nam}
 Let $(\mathcal{B}, [~,\ldots,~])$ be a Filippov algebra of order $n.$  Assume that the vector space dimension of $\mathcal{B}$ is $m.$ For a given basis $\{ x_1, \ldots, x_m \}$ of $\mathcal{B}$, regarded as a basis of linear functions on $\mathcal{B}^*$, we can define an $n$-vector field $\Pi_{\mathcal{B}^*}$ on $\mathcal{B}^*$ as (see \cite{gra-mar})
$$ \Pi_{\mathcal{B}^*} = \sum_{i_1,...,i_n} [x_{i_1}, \ldots, x_{i_n}] \frac{\partial}{\partial x_{i_1}} \otimes \cdots \otimes \frac{\partial}{\partial x_{i_n}} .$$
Note that the $n$-vector field $\Pi_{\mathcal{B}^*}$ is linear. In general $\Pi_{\mathcal{B}^*}$ need not be Nambu tensor (that is, need not be locally decomposable). If
dim $\mathcal{B} = m \leqslant n+1$, then $\Pi_{\mathcal{B}^*}$ is decomposable \cite{marmo-vilasi-vinogradov}, hence, defines a linear Nambu structure of order $n$.
\end{exam}

Let $p: A \longrightarrow M$ be a smooth vector bundle with a linear Nambu structure $(A, \{~, \ldots , ~ \})$ of order $n$ on $A$. Let $ \alpha_1, ~\alpha_2, \ldots , ~\alpha_n \in \Gamma A^*.$ Then $l_{\alpha_i},~ i \in \{1, \ldots, n\},$ are linear functions on $A$. 

Let  $[ \alpha_1, \ldots , \alpha_n] \in \Gamma A^*$ be such that
\begin{align}\label{lin-func-brck} 
& \{ l_{\alpha_1}, \ldots, l_{\alpha_n}\} = l_{[ \alpha_1, \ldots , \alpha_n]}.
\end{align}
Observe that for any $\alpha_1, \ldots , \alpha_{n-1} \in \Gamma A^*$ and $f \in C^\infty (M),$ $\{l_{\alpha_1}, \ldots , l_{\alpha _{n-1}}, f \circ p\}$ is a basic function. Moreover, it is straight forward to check that $\{l_{\alpha_1}, \ldots , l_{\alpha _{n-1}}, f \circ p\}$ is a derivation with respect to $f$.  Therefore, there exists a vector field $ \rho (\alpha_1 \wedge \cdots \wedge \alpha_{n-1})$ such that
$\{l_{\alpha_1}, \ldots , l_{\alpha _{n-1}}, f \circ p\} = \rho (\alpha_1 \wedge \cdots \wedge \alpha_{n-1})(f) \circ p.$

Note that for $g \in C^\infty(M)$,
$$\{l_{\alpha_1}, \ldots , l_{g\alpha _{n-1}}, f \circ p\} = \rho (\alpha_1 \wedge \cdots \wedge  g\alpha_{n-1})(f) \circ p.$$ On the other hand,
\begin{align*}
\{l_{\alpha_1}, \ldots , l_{g\alpha _{n-1}}, f \circ p\}
&= \{l_{\alpha_1}, \ldots , (g \circ p) l_{\alpha _{n-1}}, f \circ p\}~~(\mbox{because,}~~l_{g\alpha} = (g\circ p)l_{\alpha})\\
&= (g \circ p) \{l_{\alpha_1}, \ldots , l_{\alpha _{n-1}}, f \circ p\}+ l_{\alpha_{n-1}}\{l_{\alpha_1}, \ldots, g\circ p, f\circ p\}\\
&= (g \circ p) \rho (\alpha_1 \wedge \cdots \wedge  \alpha_{n-1})(f) \circ p\\
&= (g \rho (\alpha_1 \wedge \cdots \wedge  \alpha_{n-1})(f)) \circ p.
\end{align*}
Thus, $\rho$ is  $C^\infty(M)$-linear,  hence, yields a vector bundle map 
$\rho : \Lambda^{n-1}A^* \longrightarrow TM.$

\begin{thm}\label{filippov-algebroid-linear-nambu-structure}
Let $p: A \longrightarrow M$ be a smooth vector bundle with a linear Nambu structure $(A, \{~, \ldots , ~ \})$ of order $n$ on $A$. For $ \alpha_1, ~\alpha_2, \ldots , ~\alpha_n \in \Gamma A^*,$  let  $[ \alpha_1, \ldots , \alpha_n] \in \Gamma A^*$ be such that 
$\{ l_{\alpha_1}, \ldots, l_{\alpha_n}\} = l_{[ \alpha_1, \ldots , \alpha_n]}.$ Then this bracket on $\Gamma A^*$ together with the vector bundle map $\rho : \Lambda^{n-1}A^* \longrightarrow TM$ as described above defines a Filippov algebroid structure of order $n$ on the dual bundle $A^*$.
\end{thm}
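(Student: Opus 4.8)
The plan is to verify, one axiom at a time, that the bracket $[\alpha_1,\dots,\alpha_n]$ on $\Gamma A^*$ together with the anchor $\rho\colon\Lambda^{n-1}A^*\to TM$ satisfies the two parts of Definition \ref{n-lie-algebroid}: first that $(\Gamma A^*,[~,\dots,~])$ is an $n$-Lie algebra in the sense of Definition \ref{n-lie-algebra}, and then that $\rho$ satisfies the anchor compatibility and the Leibniz (derivation) rule. The key mechanism throughout is the dictionary $l_{\alpha+\beta}=l_\alpha+l_\beta$, $l_{g\alpha}=(g\circ p)\,l_\alpha$ between sections of $A^*$ and fibrewise-linear functions on $A$, together with the defining relations $\{l_{\alpha_1},\dots,l_{\alpha_n}\}=l_{[\alpha_1,\dots,\alpha_n]}$ and $\{l_{\alpha_1},\dots,l_{\alpha_{n-1}},f\circ p\}=\rho(\alpha_1\wedge\cdots\wedge\alpha_{n-1})(f)\circ p$. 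The three closure conditions of Definition \ref{linear-nambu-structure} guarantee that each bracket computation we need stays inside the subspaces of linear and basic functions, so no new phenomena appear.

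First I would check that $[~,\dots,~]$ is well defined and $\R$-multilinear on $\Gamma A^*$: the right-hand side $l_{[\alpha_1,\dots,\alpha_n]}$ determines $[\alpha_1,\dots,\alpha_n]$ uniquely because a section of $A^*$ is recovered from its linear function, and multilinearity follows from multilinearity of the Nambu bracket $\{~,\dots,~\}$ composed with $\alpha\mapsto l_\alpha$. Alternation of $[~,\dots,~]$ is immediate from the alternation axiom (1) of Definition \ref{nambu-poisson manifold}. The fundamental identity
$$[\alpha_1,\dots,\alpha_{n-1},[\beta_1,\dots,\beta_n]]=\sum_{i=1}^{n}[\beta_1,\dots,\beta_{i-1},[\alpha_1,\dots,\alpha_{n-1},\beta_i],\dots,\beta_n]$$
is obtained by translating via $l_{(-)}$ and invoking the fundamental identity (3) of Definition \ref{nambu-poisson manifold} applied to the $2n-1$ linear functions $l_{\alpha_1},\dots,l_{\alpha_{n-1}},l_{\beta_1},\dots,l_{\beta_n}$; here one uses that the inner brackets produce linear functions (first closure condition) so that the outer bracket makes sense and again yields a linear function. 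This establishes (i) of Definition \ref{n-lie-algebroid}.

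For (ii), the anchor $\rho$ has already been constructed and shown $C^\infty(M)$-linear in the text preceding the theorem, so it only remains to verify the two displayed identities. The anchor compatibility identity follows by writing $\{l_{\alpha_1},\dots,l_{\alpha_{n-1}},\{l_{\beta_1},\dots,l_{\beta_{n-1}},f\circ p\}\}$ two ways: directly it equals $\rho(\alpha_1\wedge\cdots\wedge\alpha_{n-1})\bigl(\rho(\beta_1\wedge\cdots\wedge\beta_{n-1})(f)\bigr)\circ p$, while the fundamental identity of the Nambu bracket rewrites it as $\rho(\beta_1\wedge\cdots\wedge\beta_{n-1})\bigl(\rho(\alpha_1\wedge\cdots\wedge\alpha_{n-1})(f)\bigr)\circ p$ plus the sum of terms $\rho(\beta_1\wedge\cdots\wedge[\alpha_1,\dots,\alpha_{n-1},\beta_i]\wedge\cdots\wedge\beta_{n-1})(f)\circ p$; subtracting gives the bracket-of-vector-fields identity since $f$ is arbitrary. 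The Leibniz rule $[\alpha_1,\dots,\alpha_{n-1},g\alpha]=g[\alpha_1,\dots,\alpha_{n-1},\alpha]+\rho(\alpha_1\wedge\cdots\wedge\alpha_{n-1})(g)\,\alpha$ comes from computing $\{l_{\alpha_1},\dots,l_{\alpha_{n-1}},(g\circ p)l_\alpha\}$ via the Leibniz rule (2) of Definition \ref{nambu-poisson manifold}, using $l_{g\alpha}=(g\circ p)l_\alpha$, the identity $l_{g\alpha}\cdot(\text{basic})=$ the corresponding relation, and the fact that the bracket with more than one basic entry vanishes (third closure condition) to discard the unwanted term. The step I expect to be the main obstacle is the bookkeeping in the fundamental identity and the anchor compatibility: one must be careful that every intermediate bracket appearing has the correct mix of linear versus basic arguments so that the closure conditions of Definition \ref{linear-nambu-structure} apply and no term falls outside the classes of functions for which the translation $l_{(-)}$, $\rho$ is available; once that bookkeeping is set up, each identity is a direct transcription of the corresponding Nambu-bracket axiom.
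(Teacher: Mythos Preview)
Your proposal is correct and follows essentially the same route as the paper: both verify the $n$-Lie algebra axioms by pulling back through $l_{(-)}$ to the Nambu bracket, prove the anchor compatibility by applying the fundamental identity of $\{~,\dots,~\}$ to $n-1$ linear functions and one basic function $f\circ p$, and derive the Leibniz rule from $l_{g\alpha}=(g\circ p)l_\alpha$ together with the Leibniz rule of the Nambu bracket. One minor remark: in the Leibniz step the Nambu Leibniz rule applied to $(g\circ p)l_\alpha$ already produces exactly the two desired terms, so no third closure condition is needed there to ``discard an unwanted term.''
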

\begin{proof}
The bracket $[~,\ldots, ~]$ on $\Gamma A^*$ is skew symmetric and satisfies fundamental identity as so does the bracket $\{~, \ldots, ~\}.$
Let $\alpha_1, \ldots , \alpha_{n-1}, \beta_1, \ldots, \beta_{n-1} \in \Gamma A^*$ and $f \in C^\infty(M).$
Note that
\begin{align*}
& \sum_{i=1}^{n-1}\{l_{\beta_1}, \ldots, l_{[\alpha_1, \ldots,\alpha_{n-1}, \beta_i]}, \ldots, l_{\beta_{n-1}}, f \circ p\}\\
&= \sum_{i=1}^{n-1}\rho(\beta_1 \wedge \cdots \wedge \beta_{i-1}\wedge [\alpha_1, \ldots, \alpha_{n-1}, \beta_i]\wedge \cdots\wedge\beta_{n-1})(f)\circ p.
\end{align*}
On the other hand,
\begin{align*}
& \sum_{i=1}^{n-1}\{l_{\beta_1}, \ldots, l_{[\alpha_1, \ldots,\alpha_{n-1}, \beta_i]}, \ldots, l_{\beta_{n-1}}, f \circ p\}\\
&= \sum_{i=1}^{n-1}\{l_{\beta_1}, \ldots, \{l_{\alpha_1}, \ldots, l_{\alpha_{n-1}}, l_{\beta_i}\}, \ldots, l_{\beta_{n-1}}, f \circ p\}\\
&= \{l_{\alpha_1}, \ldots , l_{\alpha_{n-1}}, \{l_{\beta_1}, \ldots , l_{\beta_{n-1}}, f \circ p\}\}-\{l_{\beta_1}, \ldots ,  l_{\beta_{n-1}}, \{l_{\alpha_1}, \ldots , l_{\alpha_{n-1}}, f \circ p\}\}\\
&= \rho(\alpha_1 \wedge \cdots \wedge\alpha_{n-1})\rho(\beta_1 \wedge \cdots \wedge \beta_{n-1})(f)\circ p- \rho(\beta_1\wedge \cdots \wedge \beta_{n-1}) \rho(\alpha_1 \wedge \cdots \wedge \alpha_{n-1})(f)\circ p\\
&= [\rho(\alpha_1 \wedge \cdots \wedge \alpha_{n-1}), \rho(\beta_1 \wedge \cdots \wedge \beta_{n-1})](f)\circ p.
\end{align*}
Therefore,
\begin{align*}
& [\rho(\alpha_1 \wedge \cdots \wedge \alpha_{n-1}), \rho(\beta_1 \wedge \cdots \wedge \beta_{n-1})]\\
&= \sum_{i=1}^{n-1}\rho(\beta_1 \wedge \cdots \wedge \beta_{i-1}\wedge  [\alpha_1, \ldots, \alpha_{n-1}, \beta_i]\wedge \cdots\wedge\beta_{n-1}).
\end{align*}

It remains to show that
$$[\alpha_1, \ldots, \alpha_{n-1}, f\beta] = f[\alpha_1, \ldots , \alpha_{n-1}, \beta] + \rho(\alpha_1 \wedge \cdots \wedge \alpha_{n-1})(f)\beta.$$
This follows from the identity
\begin{align*}
l_{[\alpha_1, \ldots, \alpha_{n-1}, f\beta]} &= \{l_{\alpha_1}, \ldots , l_{\alpha_{n-1}}, l_{f\beta}\}\\
&= \{l_{\alpha_1}, \ldots , l_{\alpha_{n-1}}, (f\circ p)l_{\beta}\}\\
&= (f \circ p)\{l_{\alpha_1}, \ldots , l_{\alpha_{n-1}}, l_{\beta}\}+ \{l_{\alpha_1}, \ldots , l_{\alpha_{n-1}}, f\circ p\}l_{\beta}\\
&= (f\circ p) l_{[\alpha_1, \ldots, \alpha_{n-1}, \beta]}+(\rho(\alpha_1 \wedge \cdots \wedge \alpha_{n-1})(f)\circ p)l_{\beta}\\
&= l_{f[\alpha_1, \ldots, \alpha_{n-1}, \beta] }+l_{\rho(\alpha_1 \wedge \cdots \wedge \alpha_{n-1})(f)\beta}\\
&= l_{ f[\alpha_1, \ldots, \alpha_{n-1}, \beta] +\rho(\alpha_1 \wedge \cdots \wedge \alpha_{n-1})(f)\beta}.
\end{align*}
Therefore, $(A^*, [~, \ldots, ~], \rho)$  is a Filippov algebroid of oder $n$. 
\end{proof}

\begin{remark}
It may be noted that linear Nambu structure of order $n$ is a generalization of the notion of linear Poisson structure and for $n > 2$, the converse of the above result need not be true. In fact, given a Filippov algebroid structure on $A^* \rightarrow M$, the total space of the bundle $A \rightarrow M$ carries an $n$-vector field $\Pi_{A}$ such that the induced bracket
on the linear functions on $A$ is given by Equation (\ref{lin-func-brck}). Thus, the space of linear functions on $A$ forms a Filippov algebra of order $n$.
Moreover, one can observe that $\Pi_{A}$ is indeed a linear $n$-vector field in the sense of Definition \ref{linear-n-vector-field}, although $\Pi_A$ need not be a Nambu tensor.
In \cite{marmo-vilasi-vinogradov}, the authors showed that if $\mathcal{B}$ is a Filippov algebra of order $n$ (considered as a Filippov algebroid over a point), then the linear
$n$-vector field on $\mathcal{B}^*$ is a Nambu tensor if and only if the vector space dimension of $\mathcal{B}$ is less or equal to $n+1$ (see Example \ref{exam-lin-nam}). The linear
$n$-vector field on the dual of the Filippov algebroid is given by
$$\Pi_{T^* \mathbb{R}^m} = \frac{\partial}{\partial \dot{x}_1} \wedge \cdots \wedge \frac{\partial}{\partial \dot{x}_n} \wedge \frac{\partial}{\partial x_1} ,$$
where $\dot{x}_1, \ldots, \dot{x}_m$ are the cotangent coordinates of $T^* \mathbb{R}^m$ (cf. Example \ref{exam-dual-nambu}). This defines a linear Nambu structure on $T^* \mathbb{R}^m$.
\end{remark}

\vspace{0.5cm}
\section{Weak Lie-Filippov algebroid pair}\label{$5$}
The aim of this section is to introduce a notion of {\it weak Lie-Filippov algebroid pair} structure on a smooth vector bundle and to show that such structures arise naturally from Nambu-Poisson manifolds of order $n\geq 3.$ The content of this section is a prelude to the notion of weak Lie-Filippov bialgebroid which will develop in the next section.

Let (M, \{~, \ldots, ~\}) be a Nambu-Poisson manifold of order $n\geq 3$. Let $P\in \Gamma (\Lambda^nTM)$ be the corresponding Nambu-Poisson tensor defined by
\[P(df_1,\ldots,df_n)=\{f_1,\ldots,f_n\},~~\text{for all}~~ f_1,\ldots,f_n\in C^\infty(M).\]

Then we have an $n$-ary operation $[~, \ldots, ~]$ (\cite{vaisman}), called {\it Nambu form-bracket}  on the space of $1$-forms defined as follows.
\begin{defn}\label{Nambu-form-bracket}
The Nambu form-bracket is defined as 
\begin{align*}
[\alpha_1, \ldots, \alpha_n]
&= d(P(\alpha_1, \ldots, \alpha_n)) + \sum_{k=1}^n (-1)^{n+k}\iota_{P^\sharp(\alpha_1\wedge \cdots \wedge\widehat{\alpha}_k\wedge \cdots \wedge \alpha_n)}d\alpha_k\\
&=  \sum_{k=1}^n (-1)^{n+k} \mathcal L_{P^\sharp(\alpha_1\wedge \cdots \wedge \widehat{\alpha}_k\wedge \ldots \wedge \alpha_n)}\alpha_k-(n-1)d(P(\alpha_1, \ldots ,\alpha_n)),
\end{align*}
where $\alpha_k,~k=1, \ldots, n$ are $1$-forms on $M$ and $\mathcal L_X$ denotes the Lie derivative operator with respect to a vector field $X$.
\end{defn}

The following is an illustrative example.
\begin{exam}\label{example-nambu-form-bracket}
Consider $M = \mathbb{R}^3$ with the Nambu $3$-tensor $P = x  \frac{\partial}{\partial x}  \wedge \frac{\partial}{ \partial y} \wedge \frac{\partial}{ \partial z}.$ Using the definition
above, one can write down the expressions of Nambu form-brackets of the basis elements of the space of $1$-forms. For instance,
\begin{align*}
 &[f_1 dx, f_2 dx, g dy]\\
=&~ f_2 g ~\iota_{P^\sharp(dx \wedge dy)} d (f_1 dx) - f_1 g ~\iota_{P^\sharp (dx \wedge dy)} d (f_2 dx) \\
=&~ x f_2 g ~ \iota_{ \frac{\partial}{\partial z}} (df_1 \wedge dx) - x f_1 g ~\iota_{\frac{\partial}{\partial z}} (df_2 \wedge dx) 
= x g ~ ( f_2  \frac{\partial f_1}{\partial z}  - f_1 \frac{\partial f_2}{\partial z}) ~dx
\end{align*}
and
\begin{align*}
 &[f dx, g dy, hdz]\\
=&~ d (fgh P (dx, dy, dz)) + gh ~\iota_{P^\sharp (dy \wedge dz)} d(fdx) - fh ~ \iota_{P^\sharp (dx \wedge dz)} d (g dy) + fg ~ \iota_{P^\sharp (dx \wedge dy)} d (h dz)\\
=&~ d (fgh x) + xgh ~ \iota_{ \frac{\partial}{\partial x} } (df \wedge dx) + xfh ~ \iota_{\frac{\partial}{ \partial y}} (dg \wedge dy) 
+ xfg ~ \iota_{\frac{\partial}{ \partial z}} (dh \wedge dz) \\
=&~ d (fgh x) + xgh ~( - \frac{\partial f }{\partial y} dy - \frac{\partial f}{\partial z} dz   ) + xfh ~ (- \frac{\partial g }{\partial x} dx - \frac{\partial g}{\partial z} dz ) + xfg ~(- \frac{\partial h }{\partial x} dx - \frac{\partial h}{\partial y} dy )\\
=&~ fgh ~dx  + x \frac{\partial f}{\partial x} gh ~dx + x \frac{\partial g}{\partial y} fh ~dy + x \frac{\partial h}{\partial z} fg ~dz,
\end{align*}
where $f$, $g$, $h$, $f_1$ and $f_2$ are smooth functions.
Explicit expression of all other brackets can be obtained similarly.
\end{exam}

We have the following proposition from \cite{vaisman}.

\begin{prop}\label{properties-Nambu-form-bracket}
For a Nambu-Poisson manifold $(M, \{~, \ldots, ~\})$ of order $n$, the Nambu form-bracket on the space of $1$-forms satisfies the following properties.
\begin{enumerate}
\item The Nambu form-bracket is skew-symmetric;
\item $[df_1, \ldots, df_n] = d\{f_1, \ldots, f_n\}$ for all $f_1, \ldots, f_n \in C^\infty(M)$;
\item for any $1$-forms $\alpha_1, \ldots, \alpha_n$ and for all $f \in C^\infty(M)$,
$$[\alpha_1, \ldots, \alpha_{n-1}, f\alpha_n] =f [\alpha_1, \ldots, \alpha_{n-1}, \alpha_n] + P^\sharp(\alpha_1, \ldots, \alpha_{n-1})(f)\alpha_n;$$
\item for any $1$-form $\alpha$ and for all $f_1, \ldots, f_{n-1} \in C^\infty(M),$
$$[df_1, \ldots, df_{n-1}, \alpha] = \mathcal L_{X_{f_1 \ldots f_{n-1}}}\alpha;$$
\item for any $1$-forms $\alpha_1, \ldots, \alpha_n$ and for all $f_1, \ldots, f_{n-1} \in C^\infty(M)$,
$$\mathcal L_{X_{f_1 \ldots f_{n-1}}}[\alpha_1, \ldots, \alpha_n]= \sum_{i=1}^n[\alpha_1, \ldots, \mathcal L_{X_{f_1 \ldots f_{n-1}}}\alpha_i,\ldots, \alpha_n].$$
\end{enumerate}
\end{prop}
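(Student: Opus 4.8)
The plan is to verify all five items directly from the two equivalent expressions for the Nambu-form-bracket in Definition~\ref{Nambu-form-bracket}; items (1)--(4) are essentially formal, and the substance of the proposition lies in item (5). First I would check that the two displayed formulas for $[\alpha_1,\ldots,\alpha_n]$ really coincide, using Cartan's identity $\mathcal L_X=\iota_X d+d\,\iota_X$ together with the relation $\iota_{\sharp_P(\alpha_1,\ldots,\widehat{\alpha}_k,\ldots,\alpha_n)}\alpha_k=\langle\alpha_k,\sharp_P(\alpha_1,\ldots,\widehat{\alpha}_k,\ldots,\alpha_n)\rangle=(-1)^{n-k}P(\alpha_1,\ldots,\alpha_n)$; summing over $k$ turns the $d\,\iota$-part of $\sum_k(-1)^{n+k}\mathcal L_{\sharp_P(\ldots)}\alpha_k$ into $n\,d(P(\alpha_1,\ldots,\alpha_n))$, which accounts for the passage between the $+d(P(\cdots))$ in the first line and the $-(n-1)d(P(\cdots))$ in the second. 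Item (1) then follows from a sign check: the term $d(P(\alpha_1,\ldots,\alpha_n))$ is alternating because $P$ is, and transposing two arguments in $\sum_k(-1)^{n+k}\iota_{\sharp_P(\alpha_1,\ldots,\widehat{\alpha}_k,\ldots,\alpha_n)}d\alpha_k$ only permutes summands and signs since $\sharp_P$ is alternating in its $n-1$ slots. Item (2) is immediate: for $\alpha_k=df_k$ all the $d\alpha_k$ vanish, leaving $[df_1,\ldots,df_n]=d(P(df_1,\ldots,df_n))=d\{f_1,\ldots,f_n\}$.

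For item (3) I would expand $[\alpha_1,\ldots,\alpha_{n-1},f\alpha_n]$ from the $\iota d$-form: the term $d(P(\alpha_1,\ldots,\alpha_{n-1},f\alpha_n))$ produces $f\,d(P(\alpha_1,\ldots,\alpha_n))+P(\alpha_1,\ldots,\alpha_n)\,df$, the $k=n$ summand produces (via $\iota_X(df\wedge\alpha_n)=\sharp_P(\alpha_1,\ldots,\alpha_{n-1})(f)\,\alpha_n-P(\alpha_1,\ldots,\alpha_n)\,df$) a term $\sharp_P(\alpha_1,\ldots,\alpha_{n-1})(f)\,\alpha_n$ together with a $-P(\alpha_1,\ldots,\alpha_n)\,df$ cancelling the previous one, and the summands with $k<n$ simply acquire a factor $f$ from linearity of $\sharp_P$; collecting gives the asserted Leibniz-type identity. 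For item (4) I would use the $\mathcal L$-form with $X:=X_{f_1,\ldots,f_{n-1}}=\sharp_P(df_1,\ldots,df_{n-1})$: its $k=n$ summand is $\mathcal L_X\alpha$, while for $k<n$ the summand equals $(-1)^{n+k}\mathcal L_{\sharp_P(df_1,\ldots,\widehat{df}_k,\ldots,df_{n-1},\alpha)}df_k=(-1)^{n+k}(-1)^{n-k}d(P(df_1,\ldots,df_{n-1},\alpha))$, and these $n-1$ terms sum to $(n-1)d(P(df_1,\ldots,df_{n-1},\alpha))$, which cancels the $-(n-1)d(P(\cdots))$ term, leaving $[df_1,\ldots,df_{n-1},\alpha]=\mathcal L_X\alpha$.

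The main obstacle is item (5), and the crux is the lemma that the Hamiltonian vector field $X:=X_{f_1,\ldots,f_{n-1}}$ preserves the Nambu tensor, i.e. $\mathcal L_X P=0$. This I would prove by evaluating on exact forms: $(\mathcal L_X P)(dg_1,\ldots,dg_n)=X\{g_1,\ldots,g_n\}-\sum_i P(dg_1,\ldots,d(Xg_i),\ldots,dg_n)$ equals $\{f_1,\ldots,f_{n-1},\{g_1,\ldots,g_n\}\}-\sum_i\{g_1,\ldots,\{f_1,\ldots,f_{n-1},g_i\},\ldots,g_n\}$, which vanishes by the fundamental identity of Definition~\ref{nambu-poisson manifold}; since $P$ is a tensor and exact forms span $T^*_mM$ pointwise, $\mathcal L_X P=0$. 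From this one deduces $\mathcal L_X(P(\alpha_1,\ldots,\alpha_n))=\sum_i P(\alpha_1,\ldots,\mathcal L_X\alpha_i,\ldots,\alpha_n)$ and $[X,\sharp_P(\beta_1,\ldots,\beta_{n-1})]=\sum_j\sharp_P(\beta_1,\ldots,\mathcal L_X\beta_j,\ldots,\beta_{n-1})$ for all $1$-forms $\beta_j$. Finally I would apply $\mathcal L_X$ to the $\iota d$-form of $[\alpha_1,\ldots,\alpha_n]$ term by term, using $[\mathcal L_X,d]=0$ and $[\mathcal L_X,\iota_Y]=\iota_{[X,Y]}$, so that $\mathcal L_X d(P(\alpha_1,\ldots,\alpha_n))=\sum_i d(P(\alpha_1,\ldots,\mathcal L_X\alpha_i,\ldots,\alpha_n))$ and $\mathcal L_X(\iota_{\sharp_P(\ldots,\widehat{\alpha}_k,\ldots)}d\alpha_k)=\sum_{i\ne k}\iota_{\sharp_P(\ldots,\mathcal L_X\alpha_i,\ldots,\widehat{\alpha}_k,\ldots)}d\alpha_k+\iota_{\sharp_P(\ldots,\widehat{\alpha}_k,\ldots)}d(\mathcal L_X\alpha_k)$, and then check that, after re-indexing, the sum of these pieces is exactly $\sum_{i=1}^n[\alpha_1,\ldots,\mathcal L_X\alpha_i,\ldots,\alpha_n]$. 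I expect the careful verification of $\mathcal L_X P=0$ from the fundamental identity, and the re-indexing of the double sum in this last step, to be the only points requiring genuine care; everything else is bookkeeping.
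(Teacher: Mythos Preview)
Your argument is correct. Note, however, that the paper does not actually prove this proposition: it is quoted verbatim from Vaisman \cite{vaisman} with the preamble ``We have the following proposition from \cite{vaisman}'', and no proof is given in the text. What you have written is a complete direct verification along the standard lines, and each step checks out: the equivalence of the two formulas via Cartan's identity and $\iota_{\sharp_P(\ldots,\widehat{\alpha}_k,\ldots)}\alpha_k=(-1)^{n-k}P(\alpha_1,\ldots,\alpha_n)$; the sign bookkeeping for (1); the trivialization of the $\iota\, d$-terms for (2); the Leibniz computation for (3) with the cancellation of $P(\alpha_1,\ldots,\alpha_n)\,df$; the reduction of the $k<n$ summands to $(n-1)d(P(\cdots))$ in (4); and, for (5), the key input $\mathcal L_{X_{f_1,\ldots,f_{n-1}}}P=0$ deduced from the fundamental identity followed by the commutator rules $[\mathcal L_X,d]=0$ and $[\mathcal L_X,\iota_Y]=\iota_{[X,Y]}$. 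This is exactly how the result is established in the cited source, so your approach is not merely correct but the intended one; the paper simply outsources it.
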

From the properties $(4)$ and $(5)$ of Proposition \ref{properties-Nambu-form-bracket} we get
\begin{corollary}\label{fundamental identity-for exact form}
For any $1$-forms $\alpha_1, \ldots, \alpha_n$ and for all $f_1, \ldots, f_{n-1} \in C^\infty(M)$, the following identity holds
$$[df_1, \ldots, df_{n-1}, [\alpha_1, \ldots, \alpha_n]]= \sum_{i=1}^n [ \alpha_1, \ldots, \alpha_{i-1}, [df_1, \ldots, df_{n-1}, \alpha_i], \ldots, \alpha_n].$$
\end{corollary}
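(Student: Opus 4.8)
The plan is to deduce the identity directly from properties (4) and (5) of Proposition~\ref{properties-Nambu-form-bracket}, with no further computation needed. First I would apply property (4) to the single $1$-form $\alpha := [\alpha_1, \ldots, \alpha_n]$, which rewrites the left-hand side
$$[df_1, \ldots, df_{n-1}, [\alpha_1, \ldots, \alpha_n]] = \mathcal{L}_{X_{f_1, \ldots, f_{n-1}}}[\alpha_1, \ldots, \alpha_n],$$
where $X_{f_1, \ldots, f_{n-1}} = \sharp_P(df_1, \ldots, df_{n-1})$ is the $P$-Hamiltonian vector field. Thus the whole question is transported into a statement about how the Lie derivative along a Hamiltonian vector field interacts with the Nambu-form-bracket.

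Next I would invoke property (5), which asserts precisely that $\mathcal{L}_{X_{f_1, \ldots, f_{n-1}}}$ is a derivation of the Nambu-form-bracket, to obtain
$$\mathcal{L}_{X_{f_1, \ldots, f_{n-1}}}[\alpha_1, \ldots, \alpha_n] = \sum_{i=1}^n [\alpha_1, \ldots, \mathcal{L}_{X_{f_1, \ldots, f_{n-1}}}\alpha_i, \ldots, \alpha_n].$$
Finally I would apply property (4) once more, this time to each $1$-form $\alpha_i$ appearing in the $i$-th summand, which replaces $\mathcal{L}_{X_{f_1, \ldots, f_{n-1}}}\alpha_i$ by $[df_1, \ldots, df_{n-1}, \alpha_i]$; the resulting expression
$$\sum_{i=1}^n [\alpha_1, \ldots, \alpha_{i-1}, [df_1, \ldots, df_{n-1}, \alpha_i], \ldots, \alpha_n]$$
is exactly the right-hand side of the claimed identity.

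Since each of the three steps is a direct application of an already-established property, there is essentially no obstacle here; the corollary is a purely formal consequence of Proposition~\ref{properties-Nambu-form-bracket}. The only point requiring a remark is that $\mathcal{L}_{X_{f_1, \ldots, f_{n-1}}}$ sends $1$-forms to $1$-forms, so that every bracket written above is defined on $\Omega^1(M)$; this is immediate, and hence the proof consists of nothing more than the substitution chain described above.
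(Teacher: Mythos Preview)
Your proof is correct and follows precisely the approach indicated in the paper, which simply states that the corollary is obtained from properties (4) and (5) of Proposition~\ref{properties-Nambu-form-bracket}; you have written out the substitution chain that the paper leaves implicit.
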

\begin{remark}\label{fundamental-identity-NP-manifold}
Observe that any closed form is locally exact and by property $(3)$ of Proposition \ref{properties-Nambu-form-bracket} the $n$-ary bracket depends only on the local behaviour of its entries. Thus, it follows from Corollary \ref{fundamental identity-for exact form} that the fundamental identity
$$[\alpha_1, \ldots , \alpha_{n-1}, [\beta_1, \ldots , \beta_n]] = \sum_{i=1}^n [\beta_1, \ldots , \beta_{i-1}, [\alpha_1, \ldots , \alpha_{n-1}, \beta_i], \ldots ,\beta_n]$$ holds whenever the $1$-forms $\alpha_j,~ 1\leq j \leq n-1$ are closed. 
\end{remark}

In order to understand the above properties of Nambu-Poisson manifolds in the proper perspective, we introduce a notion of a {\it weak-Lie-Filippov algebroid pair} structure on a smooth vector bundle. 

\begin{defn}\label{weak-Lie-Filippov-algebroid-pair}
A smooth vector bundle $A\longrightarrow M$ is said to admit a weak Lie-Filippov algebroid pair structure of order $n>2$ if the following conditions are satisfied.
\begin{enumerate}
\item The vector bundle  $A\longrightarrow M$ is a Lie algebroid.  
\item The space of smooth sections $\Gamma A^*$ of the dual bundle $A^*$ admits a skew-symmetric $n$-ary bracket
$$[~, \ldots ,~]: \underbrace{\Gamma A^* \times \cdots \times \Gamma A^*}_{n~ \mbox{times}} \longrightarrow \Gamma A^*$$
satisfying 
$$[\alpha_1, \ldots , \alpha_{n-1}, [\beta_1, \ldots , \beta_n]] = \sum_{i=1}^n [\beta_1, \ldots , \beta_{i-1}, [\alpha_1, \ldots , \alpha_{n-1}, \beta_i], \ldots ,\beta_n]$$
for all $d_A$-closed sections $\alpha_i \in \Gamma A^*,~ 1\leq i \leq n-1$ and for any sections $\beta_j \in \Gamma A^*,~ 1\leq j\leq n,$ $d_A$ being the differential of the cochain complex $\{\Gamma(\Lambda^kA^\ast), d_A \}_{k\geq 0}$ (cf. Definition \ref{cohomology-lie-algd}) defining the Lie algebroid cohomology of $A$ with trivial representation.
\item There exists a vector bundle morphism $\rho : \Lambda^{n-1}A^* \longrightarrow TM$, called the anchor of the pair $(A, A^*)$, such that the identity
$$ [\rho (\alpha_1 \wedge \cdots \wedge \alpha_{n-1}), \rho (\beta_1 \wedge \cdots \wedge \beta_{n-1})] = \sum_{i=1}^{n-1}\rho (\beta_1 \wedge \cdots \wedge [\alpha_1, \ldots , \alpha_{n-1}, \beta_i] \wedge \cdots \wedge \beta_{n-1})$$
holds for all $d_A$-closed sections $\alpha_i \in \Gamma A^*,~ 1\leq i \leq n-1$ and for any sections $\beta_j \in \Gamma A^*,~ 1\leq j\leq n-1.$
\item For all sections $\alpha_i \in \Gamma A^*,~ 1\leq i \leq n$ and any $f \in C^\infty(M)$,
$$[\alpha_1, \ldots , \alpha_{n-1}, f\alpha_n] = f [\alpha_1, \ldots , \alpha_{n-1}, \alpha_n] + \rho (\alpha_1 \wedge \cdots \wedge \alpha_{n-1})(f)\alpha_n$$ holds.
\end{enumerate}
A weak Lie-Filippov algebroid pair structure of order $n>2$ on a vector bundle $A$ over $M$ will be denoted by the pair $(A, A^*).$
\end{defn}

\begin{remark}
For a weak Lie-Filippov algebroid pair $(A,A^*)$ over $M$, $A$ is a Lie algebroid and $A^*$ is almost a Filippov algebroid over $M$ except that some of the defining conditions are restricted (cf. Definition \ref{n-lie-algebroid}).
\end{remark}
Next, we prove a property of weak Lie-Filippov algebroid pair structure similar to that of Filippov algebroids (cf. Theorem \ref{classification-filippov-algeb}). For this, we first introduce the following definition.

\begin{defn}\label{nambu-n-gerstenhaber-algebra}
Let $R$ be a ring and $C$ be an $R$-algebra. A {\it Nambu-Gerstenhaber algebra of order $n$} over $C$ is a $\mathbb Z$-graded commutative associative $C$-algebra $(\mathcal A =\oplus_{i\in \mathbb Z}\mathcal A ^i, \wedge) $ equipped with 
\begin{enumerate}
\item a differential $\delta$ on $\mathcal A$, which is a derivation of degree $1$ with respect to $\wedge$, that is, $\delta : \mathcal A^i \longrightarrow \mathcal A^{i+1}$ is a $R$-linear map such that $ \delta^2 =0$ and 
$$ \delta (a \wedge b) = \delta a \wedge b + (-1)^{|a|}a\wedge \delta b$$ for homogeneous elements $ a, b \in \mathcal A;$
\item an $R$-multilinear operation
$$ [~, \ldots , ~] : \mathcal A \times \cdots \times \mathcal A \longrightarrow \mathcal A, ~~ (a_1, \ldots , a_n) \mapsto [a_1, \ldots , a_n],$$ referred to as a Nambu-Gerstenhaber $n$-ary bracket, satisfying
\begin{itemize}
\item[(i)] $[a_1, \ldots , a_n] \in \mathcal A^{|a_1|+ \cdots + |a_n| - (n-1)},$ for homogeneous entries;
\item[(ii)] $[a_1, \ldots , a_i, a_{i+1}, \ldots,  a_n]= -(-1)^{(|a_i|-1)(|a_{i+1}|-1)} [a_1, \ldots , a_{i+1}, a_i, \ldots,  a_n],$ for all $a_i \in \mathcal A ^{|a_i|}$;
\item[(iii)]  $[a_1, \ldots,  a_{n-1}, [b_1, \ldots , b_n]] 
= \sum _{i=1}^n [b_1, \ldots ,b_{i-1}, [ a_1, \ldots a_{n-1}, b_i], \ldots b_n],$\\
for all $\delta$-closed $a_1, a_2, \ldots , a_{n-1} \in \mathcal A^1$ and $b_k \in \mathcal A^{|b_k|}$;
\item[(iv)] $[a_1, \ldots , a_{n-1}, b\wedge c]$ 
$$= [a_1, \ldots , a_{n-1}, b] \wedge c +(-1)^{ [(|a_1|-1) + \cdots + (|a_{n-1}|-1)]|b|}b\wedge [a_1, \ldots , a_{n-1}, c]$$
holds for all  $a_i \in \mathcal A^{|a_i|}$, $b \in \mathcal A^{|b|}$, and $c \in \mathcal A.$
\end{itemize}
\end{enumerate}
\end{defn}
A Nambu-Gerstenhaber algebra of order $n$ will be denoted by the quadruple 
$$(\mathcal A, \wedge, [~, \ldots , ~], \delta).$$ 

\begin{thm}\label{classification-weak-lie-filippov-algebroid-pair}
There is a one-to-one correspondence between weak Lie-Filippov algebroid pair structures of order $n>2$ on a smooth vector bundle $A\longrightarrow M$ and Nambu-Gerstenhaber algebra structures of order $n$ on the graded $C^\infty(M)$-module $\Gamma (\Lambda^\bullet A^*).$  
\end{thm}

\begin{proof}
Let $A\longrightarrow M$ be a smooth vector bundle. Assume that $(A, A^*)$ is a weak Lie-Filippov algebroid pair of order $n>2$. We extend the given $n$-ary bracket $[~, \ldots ,~]$ on $\Gamma A^*$ to an $n$-ary bracket on $\Gamma (\Lambda^\bullet A^*)$ as follows. For $\alpha_1, \ldots , \alpha_{n-1} \in \Gamma A^*$ and $f \in C^\infty(M) = \Gamma (\Lambda^0A^*),$ we define
$$[\alpha_1, \ldots ,\alpha_{n-1}, f] := \rho ( \alpha_1 \wedge \cdots \wedge \alpha_{n-1})(f),$$ 
where $\rho$ is the anchor of the pair $(A, A^*).$ We extend the bracket $[~, \ldots ,~]$ uniquely to $\Gamma (\Lambda^\bullet A^*)$ by induction using conditions $2$ (ii) and $2$ (iv) of Definition \ref{nambu-n-gerstenhaber-algebra}. Then by an argument similar to the first half of the proof of Theorem \ref{classification-filippov-algeb}, one shows that condition $2$ (iii) of Definition \ref{nambu-n-gerstenhaber-algebra} holds. Thus, $(\Gamma (\Lambda^\bullet A^*), \wedge , [~, \ldots , ~], d_A)$ is a Nambu-Gerstenhaber algebra of order $n$.

Conversely, suppose that $(\Gamma (\Lambda^\bullet A^*), \wedge , [~, \ldots , ~], \delta)$ is a Nambu-Gerstenhaber algebra of order $n~(n > 2)$. Then the differential $\delta$ induces a Lie algebroid structure on $A$ with $d_A = \delta$ as the differential operator for the Lie algebroid cochain complex \cite{xu}. The restriction of the bracket $[~, \ldots ,~]$ to $\Gamma A^*$ satisfies condition $(2)$ of Definition \ref{weak-Lie-Filippov-algebroid-pair}. Next, observe that for $ \alpha_1, \ldots ,\alpha_{n-1} \in \Gamma A^*$ and $f, g \in C^\infty(M),$ we have 
$$[\alpha_1, \ldots ,\alpha_{n-1}, fg] = [\alpha_1, \ldots ,\alpha_{n-1}, f]g + f[\alpha_1, \ldots ,\alpha_{n-1}, g].$$
Thus, $[\alpha_1, \ldots ,\alpha_{n-1}, ~]$ defines a vector field on $M$, giving us a $C^\infty(M)$-linear map 
$$\Gamma (\Lambda^{n-1}A^*) \longrightarrow \mathfrak{X}(M),\,\,\rho (\alpha_1\wedge \cdots  \wedge \alpha_{n-1}) := [\alpha_1, \ldots ,\alpha_{n-1}, ~]$$
and hence, a vector bundle map $ \rho : \Lambda^{n-1}A^* \longrightarrow TM$. Rest of the proof is similar to the last part of the proof of Theorem \ref{classification-filippov-algeb} and we omit the details.
\end{proof}

\begin{remark}\label{Nambu-Gerstenhaber-n-bracket}
Given a weak Lie-Filippov algebroid pair $(A, A^*)$ of order $n$ over $M,$ we shall refer the induced $n$-ary bracket on  $\Gamma (\Lambda^\bullet A^*)$ as the induced Nambu-Gestenhaber bracket. 
\end{remark}
The following is one of the main results of this section.
\begin{thm}\label{NP-manifold-weak-Lie-algebroid-pair} 
Let (M, \{~, \ldots, ~\}) be a Nambu-Poisson manifold of order $n\geq 3$. Let $P\in \Gamma (\Lambda^nTM)$ be the associated Nambu-Poisson tensor. Then the pair $(TM, T^*M)$ is a weak Lie-Filippov algebroid pair of order $n\geq 3$ over $M$. 
\end{thm}

\begin{proof}
Recall from Equation (\ref{anchor-NP-manifold}) that $P$ induces a bundle mapping 
\begin{align*}
& P^\sharp: \Lambda^{n-1}T^\ast M\rightarrow TM, \,\,\big\langle \beta, P^\sharp (\alpha_1 \wedge \cdots \wedge \alpha_{n-1})\big\rangle = P (\alpha_1, \ldots , \alpha_{n-1}, \beta), 
\end{align*}
for all $\alpha_1, \ldots , \alpha_{n-1}, \beta \in \Omega^1(M).$

From Definition \ref{Nambu-form-bracket} we have the Nambu form-bracket on the space of $1$-forms as
\begin{align*}
[\alpha_1, \ldots, \alpha_n]
&= d(P(\alpha_1, \ldots, \alpha_n)) + \sum_{k=1}^n (-1)^{n+k}\iota_{P^\sharp(\alpha_1\wedge \cdots \wedge\widehat{\alpha}_k\wedge \cdots \wedge \alpha_n)}d\alpha_k\\
&=  \sum_{k=1}^n (-1)^{n+k} \mathcal L_{P^\sharp(\alpha_1\wedge \cdots \wedge \widehat{\alpha}_k\wedge \ldots \wedge \alpha_n)}\alpha_k-(n-1)d(P(\alpha_1, \ldots ,\alpha_n)),
\end{align*}
where $\alpha_k,~k=1, \ldots, n$ are $1$-forms on $M.$ On $TM$ we take the usual Lie algebroid structure and consider the vector bundle map (\ref{anchor-NP-manifold}) as the anchor. Then by properties $(1)$, $(3)$, of Proposition \ref{properties-Nambu-form-bracket} and by Remark \ref{fundamental-identity-NP-manifold}, all the defining conditions (cf. Definition \ref{weak-Lie-Filippov-algebroid-pair}) for the pair $(TM, T^*M)$ to be a weak Lie-Filippov algebroid pair of order $n\geq 3$ over $M$ hold modulo proving
$$ [P^\sharp (\alpha_1 \wedge \cdots \wedge \alpha_{n-1}), P^\sharp (\beta_1 \wedge \cdots \wedge \beta_{n-1})] = \sum_{i=1}^{n-1}P^\sharp (\beta_1 \wedge \cdots \wedge [\alpha_1, \ldots , \alpha_{n-1}, \beta_i] \wedge \cdots \wedge \beta_{n-1})$$ for all $d$-closed $1$-forms $\alpha_i,~ 1\leq i \leq n-1$ and for any $1$-forms $\beta_j,~ 1\leq j\leq n-1,$ on $M$.
To prove the above equality, we use the definition of the anchor $P^\sharp$ and Remark \ref{fundamental-identity-NP-manifold}.

If $\alpha_i = df_i$ and $\beta_j = dg_j,$ for $f_i, g_j \in C^\infty(M),$ are exact $1$-forms, then the above identity follows from Equation (\ref{expression-fundamental-id-hamiltonian}).

Next, replacing $dg_1$ by $fdg_1,$ where $f \in C^\infty(M)$ we get
\begin{align*}
& [P^\sharp(df_1 \wedge \cdots \wedge df_{n-1}), P^\sharp(f dg_1 \wedge \cdots \wedge dg_{n-1})]\\
=\,& [P^\sharp(df_1 \wedge \cdots \wedge df_{n-1}), fP^\sharp(dg_1 \wedge \cdots \wedge dg_{n-1})]\\
=\,& f[P^\sharp(df_1 \wedge \cdots \wedge df_{n-1}), P^\sharp(dg_1 \wedge \cdots \wedge dg_{n-1})]\\
&+ P^\sharp(df_1 \wedge \cdots \wedge df_{n-1})(f)P^\sharp(dg_1 \wedge \cdots \wedge dg_{n-1})\\
=\,& fP^\sharp([df_1, \ldots , df_{n-1}, dg_1]\wedge dg_2 \wedge  \cdots \wedge dg_{n-1})\\
&+ f\sum_{i=2}^{n-1}P^\sharp(dg_1 \wedge \cdots \wedge dg_{i-1} \wedge [df_1, \ldots , df_{n-1}, dg_i]\wedge \cdots \wedge dg_{n-1})\\
&+ P^\sharp(df_1 \wedge \cdots \wedge df_{n-1})(f)P^\sharp(dg_1 \wedge \cdots \wedge dg_{n-1})\\
=\,& P^\sharp([df_1, \ldots , df_{n-1}, fdg_1]\wedge dg_2 \wedge  \cdots \wedge dg_{n-1})\\
&+ \sum_{i=2}^{n-1}P^\sharp(fdg_1 \wedge \cdots \wedge dg_{i-1} \wedge [df_1, \ldots , df_{n-1}, dg_i]\wedge \cdots \wedge dg_{n-1}).
\end{align*}
Since closed forms are locally exact, we may conclude using the above observations that for all closed $1$-forms $\alpha_i,~ 1\leq i \leq n-1$ and for any $1$-forms $\beta_j,~ 1\leq j\leq n-1,$ on $M$ the equality
$$ [P^\sharp (\alpha_1 \wedge \cdots \wedge \alpha_{n-1}), P^\sharp (\beta_1 \wedge \cdots \wedge \beta_{n-1})] = \sum_{i=1}^{n-1}P^\sharp (\beta_1 \wedge \cdots \wedge [\alpha_1, \ldots , \alpha_{n-1}, \beta_i] \wedge \cdots \wedge \beta_{n-1})$$
holds. This completes the proof.
\end{proof} 

We have just seen that for a Nambu-Poisson manifold $M$ of order $n~(n\geq 3),$ the pair $(TM, T^*M)$ is a weak Lie-Filippov algebroid pair of order $n\geq 3$ over $M$. Therefore, by Theorem \ref{classification-weak-lie-filippov-algebroid-pair}, there is an $n$-ary bracket on the graded $C^\infty(M)$-module $\Gamma (\Lambda^\bullet T^*M)=\Omega^\bullet(M)$ which together with the wedge product makes it a Nambu-Gerstenhaber algebra of order $n$. We end this section proving a result that shows how this $n$-ary bracket behaves with respect to the exterior differential on $M$.

\begin{prop}\label{compatibility-d-n-gerstenhaber}
Let $(M, \{~, \ldots, ~\})$ be a Nambu-Poisson manifold of order $n$. Then for any $1$-forms $\alpha_1, \ldots , \alpha_n \in \Omega^1(M),$ we have
$$d[\alpha_1, \ldots, \alpha_n] = \sum_{i=1}^n [\alpha_1, \ldots, d\alpha_i, \ldots  \alpha_n].$$
\end{prop}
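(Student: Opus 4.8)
The plan is to reduce everything to the explicit Cartan-type description of the Nambu-form-bracket in Definition~\ref{Nambu-form-bracket} and to compute its exterior derivative. Since $d$, contraction, $\sharp_P$ and $\wedge$ are all local, I would work in a coordinate chart throughout, so that closed forms may be assumed exact. Applying $d$ to
$$[\alpha_1,\ldots,\alpha_n] = d\bigl(P(\alpha_1,\ldots,\alpha_n)\bigr) + \sum_{k=1}^{n}(-1)^{n+k}\,\iota_{\sharp_P(\alpha_1\wedge\cdots\wedge\widehat{\alpha}_k\wedge\cdots\wedge\alpha_n)}\,d\alpha_k ,$$
the first summand dies because $d^2=0$, and for each $k$ Cartan's formula $\mathcal L_X = d\iota_X + \iota_X d$ applied to the \emph{closed} $2$-form $d\alpha_k$ gives $d\,\iota_{\sharp_P(\cdots)}d\alpha_k = \mathcal L_{\sharp_P(\cdots)}d\alpha_k$. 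Hence
$$d[\alpha_1,\ldots,\alpha_n] = \sum_{k=1}^{n}(-1)^{n+k}\,\mathcal L_{\sharp_P(\alpha_1\wedge\cdots\wedge\widehat{\alpha}_k\wedge\cdots\wedge\alpha_n)}\,d\alpha_k .$$

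Next I would compute the right-hand side of the proposition, keeping in mind that $[\alpha_1,\ldots,d\alpha_i,\ldots,\alpha_n]$ denotes the induced Nambu--Gerstenhaber bracket of Theorem~\ref{NP-manifold-weak-Lie-bialgebroid} with the $2$-form $d\alpha_i$ in the $i$-th slot. Moving $d\alpha_i$ to the last argument by the graded skew-symmetry of Definition~\ref{nambu-n-gerstenhaber-algebra}(ii) introduces the sign $(-1)^{n-i}$, so it suffices to understand $[\beta_1,\ldots,\beta_{n-1},\mu]$ for $1$-forms $\beta_j$ and a $2$-form $\mu$. Specializing the Cartan formula above to $n$ one-forms whose last entry is a $1$-form $\gamma$ gives
$$[\beta_1,\ldots,\beta_{n-1},\gamma] = \mathcal L_{\sharp_P(\beta_1\wedge\cdots\wedge\beta_{n-1})}\gamma + \sum_{j=1}^{n-1}(-1)^{n+j}\,\iota_{\sharp_P(\beta_1\wedge\cdots\wedge\widehat{\beta}_j\wedge\cdots\wedge\beta_{n-1}\wedge\gamma)}\,d\beta_j .$$
Now $[\beta_1,\ldots,\beta_{n-1},-]$ is a wedge-derivation of degree $0$ that vanishes on $C^\infty(M)$, exactly like $\mathcal L_{\sharp_P(\beta_1\wedge\cdots\wedge\beta_{n-1})}$, while the correction sum is $C^\infty(M)$-linear in $\gamma$; hence their difference on $\Omega^1(M)$ extends uniquely to an algebraic derivation, that is, there is a $TM$-valued $1$-form $L(\beta_1,\ldots,\beta_{n-1})$, built only from the $d\beta_j$, with $[\beta_1,\ldots,\beta_{n-1},-] = \mathcal L_{\sharp_P(\beta_1\wedge\cdots\wedge\beta_{n-1})}(-) + \iota_{L(\beta_1,\ldots,\beta_{n-1})}(-)$ on all of $\Omega^\bullet(M)$.

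Feeding this in, the Lie-derivative part of $\sum_i[\alpha_1,\ldots,d\alpha_i,\ldots,\alpha_n]$ reproduces exactly the expression for $d[\alpha_1,\ldots,\alpha_n]$ obtained in the first step, so the proposition reduces to the vanishing of $\sum_{i=1}^{n}(-1)^{n-i}\,\iota_{L(\alpha_1,\ldots,\widehat{\alpha}_i,\ldots,\alpha_n)}\,d\alpha_i$. Unravelling $L$, this is a double sum over ordered pairs $(i,k)$ with $k\neq i$ in which $d\alpha_i$ is contracted, slot by slot, against $\sharp_P$ of a list of $1$-forms containing $d\alpha_k$; I expect it to cancel termwise under the exchange $(i,k)\leftrightarrow(k,i)$, once the signs produced by the skew-symmetry of $\sharp_P$ (equivalently, of $P$) in its arguments are accounted for. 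Carrying out this last cancellation, with its sign and index bookkeeping, is the main obstacle; everything preceding it is routine Cartan calculus. As a sanity check, when all the $\alpha_i$ are closed both sides vanish: the left-hand side because $[\alpha_1,\ldots,\alpha_n]$ is then closed, by the displayed formula for its differential, and the right-hand side term by term since each $d\alpha_i=0$.
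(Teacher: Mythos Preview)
Your approach is correct but genuinely different from the paper's. The paper argues by writing each $\alpha_i = f_i\,dg_i$, expanding $[\,f_1dg_1,\ldots,f_ndg_n\,]$ explicitly via Proposition~\ref{properties-Nambu-form-bracket}(1),(2),(3), applying $d$, and then matching coefficients term-by-term against the analogous expansion of $\sum_i[\,\ldots,d(f_idg_i),\ldots\,]$. Your route is more structural: you compute $d[\alpha_1,\ldots,\alpha_n]$ directly from the Cartan-type formula for the bracket, then decompose the degree-$0$ derivation $[\beta_1,\ldots,\beta_{n-1},-]$ on $\Omega^\bullet(M)$ as $\mathcal L_{\sharp_P(\beta_1,\ldots,\beta_{n-1})} + \iota_L$ and reduce the proposition to the vanishing of a purely algebraic double sum. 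The paper's method is more elementary and hands-on; yours isolates exactly where the nontrivial content lies and makes clear that the fundamental identity of $P$ is never used.

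Two remarks. First, a small slip: $[\beta_1,\ldots,\beta_{n-1},-]$ and $\mathcal L_{\sharp_P(\beta_1,\ldots,\beta_{n-1})}$ do not \emph{vanish} on $C^\infty(M)$; they \emph{agree} there (both send $f$ to $\sharp_P(\beta_1,\ldots,\beta_{n-1})(f)$), so their difference is $C^\infty$-linear and hence an algebraic derivation $\iota_L$ as you want. Second, the pairwise $(i,k)\leftrightarrow(k,i)$ cancellation you anticipate does go through. Setting $W=W_{i,k}:\gamma\mapsto\sharp_P(\alpha_1,\ldots,\widehat{\alpha}_i,\ldots,\widehat{\alpha}_k,\ldots,\alpha_n,\gamma)$, the map $W:T^\ast M\to TM$ is skew (since $P$ is alternating), and one checks that for any $2$-forms $\eta,\mu$ the operation $\Phi(\eta,\mu):=\iota_{K_\eta}\mu$, where $K_\eta\in\Omega^1(M;TM)$ is determined by $\iota_{K_\eta}\gamma=\iota_{W(\gamma)}\eta$, satisfies $\Phi(\eta,\mu)=\Phi(\mu,\eta)$; since the combinatorial signs you wrote down for the $(i,k)$ and $(k,i)$ contributions are opposite, the sum vanishes. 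So your ``main obstacle'' is a genuine but straightforward verification.
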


\begin{proof}
It is enough to prove the identity for $\alpha_i = f_idg_i,$ where $f_i, g_i \in C^\infty(M),~ 1\leq i \leq n.$
A direct computation using properties $(1)$ and $(3)$ of Proposition \ref{properties-Nambu-form-bracket} shows that
\begin{eqnarray} 
[f_1dg_1, \ldots ,f_ndg_n] & = & f_1f_2 \cdots f_n[dg_1, \ldots , dg_n]\nonumber \\
& + & \sum_{i=1}^n(-1)^{n-i} f_1\cdots \hat{f}_i \cdots f_n(P^\sharp (dg_1 \wedge \cdots \wedge \hat{dg}_i \wedge \cdots dg_n)f_i)dg_i,\nonumber 
\end{eqnarray}
for $f_1, \ldots, f_n,~ g_1, \ldots ,g_n \in C^\infty(M).$

One can write out $d[f_1dg_1, \ldots ,f_ndg_n]$ as a finite linear combination of terms of the form 
$df_i \wedge dg_j, (i\neq j)$, $df_i \wedge dg_i$, $dg_i\wedge dg_j$, $dg_j \wedge dg_i$, $df_i \wedge [dg_1, \ldots ,dg_n]$ and 
$[dg_1, \ldots, df_i, \ldots  ,dg_n]\wedge dg_i$. The coefficients of these terms are smooth functions on $M$. Similarly, one can expand
$$\sum_{i=1}^n [f_1dg_1, \ldots, d(f_idg_i), \ldots , f_ndg_n]$$
with respect to the same terms. To check that
$$d[f_1dg_1, \ldots ,f_ndg_n]= \sum_{i=1}^n [f_1dg_1, \ldots, d(f_idg_i), \ldots , f_ndg_n]$$
it suffices to match the coefficients on both quantities term-wise. 
We use property $(2)$ of Proposition \ref{properties-Nambu-form-bracket} and the fact that 
$$[df_1, \ldots , df_{n-1}, f_n] = P^\sharp(df_1 \wedge \cdots \wedge df_{n-1})f_n = \{f_1, \ldots, f_n\}$$
to observe that
\begin{enumerate}
\item the coefficients of $df_i \wedge dg_j, ~i\neq j,$ are the same in both the quantities;
\item the coefficients of $df_i \wedge dg_i$ are zero;
\item the coefficients of $dg_i\wedge dg_j$  are the same as the coefficients of $dg_j \wedge dg_i$ and hence, they cancel each other in one quantity while it is zero in the other;
\item the coefficients of $df_i \wedge [dg_1, \ldots ,dg_n]$ are the same in both the quantities;
\item the coefficients of $[dg_1, \ldots, df_i, \ldots  ,dg_n]\wedge dg_i$ are the same in both the quantities.
\end{enumerate}
This completes the proof.
\end{proof} 

For instance, let us look at the following classical example from \cite{nambu}.
\begin{exam}\label{example-nambu}
Consider $M = \mathbb{R}^3$ with the Nambu structure $P = \frac{\partial}{\partial x_1} \wedge \frac{\partial}{\partial x_2} \wedge \frac{\partial}{\partial x_3}$
of order $3.$ Then the Nambu form-bracket on the space of $1$-forms on $\mathbb{R}^3$ is explicitly given by the following formulas
\begin{align*}
& [ f_1 dx_1 , f_2 dx_1 , f_3 dx_1 ] =  0,~~[ g_1 dx_2 , g_2 dx_2 , g_3 dx_2 ] = 0,~~[ h_1 dx_3 , h_2 dx_3 , h_3 dx_3 ] = 0,\\ 
& [ f_1 dx_1 , f_2 dx_1 , g_1 dx_2 ] = g_1 ( f_2 \frac{\partial f_1}{\partial x_3}  - f_1 \frac{\partial f_2}{\partial x_3} ) dx_1,\\
& [ f_1 dx_1 , f_2 dx_1 , h_1 dx_3 ] = h_1 ( f_1 \frac{ \partial f_2}{\partial x_2}  -  f_2 \frac{\partial f_1}{\partial x_2} ) dx_1 ,\\
& [ f_1 dx_1 , g_1 dx_2 , g_2 dx_2 ] =  f_1 ( g_1 \frac{\partial g_2}{\partial x_3} - g_2 \frac{\partial g_1}{\partial x_3} ) dx_2,\\
& [ g_1 dx_2 , g_2 dx_2 , h_1 dx_3 ] = h_1 ( g_2 \frac{\partial g_1}{\partial x_1} - g_1  \frac{\partial g_2}{\partial x_1} ) dx_2 ,\\
& [ f_1 dx_1 , h_1 dx_3 , h_2 dx_3 ] = f_1 ( h_2 \frac{\partial h_1}{\partial x_2} - h_1  \frac{\partial h_2}{\partial x_2} ) dx_3,\\
& [ g_1 dx_2 , h_1 dx_3 , h_2 dx_3 ] = g_1 ( h_1 \frac{\partial h_2}{\partial x_1}  -  h_2  \frac{\partial h_1}{\partial x_1} ) dx_3,\\
& [ f_1 dx_1 , g_1 dx_2 , h_1 dx_3 ] = g_1h_1  \frac{\partial f_1}{\partial x_1} dx_1 + f_1h_1  \frac{\partial g_1}{\partial x_2} dx_2 + f_1g_1  \frac{\partial h_1}{\partial dx_3} dx_3 ,
\end{align*}
for $ f_1, f_2 , f_3 , g_1 , g_2 , g_3 , h_1 , h_2 , h_3 \in C^\infty(\mathbb{R}^3)$.

For this Nambu form-bracket one can directly check that the compatibility condition
$$ d [\alpha_1 , \alpha_2 , \alpha_3 ] =  [d \alpha_1 , \alpha_2 , \alpha_3] + [\alpha_1 , d \alpha_2 , \alpha_3] + [\alpha_1 , \alpha_2 , d\alpha_3 ]$$
holds, for $\alpha_1, \alpha_2 , \alpha_3 \in \Omega^1(\mathbb{R}^3)$ in each of the above cases.
\end{exam}

\begin{remark}\label{Nambu-Lie-example}
Nambu structures on Lie algebroids are further generalizations of Nambu-Poisson manifold \cite{wade}. Let $A\rightarrow M$ be a Lie algebroid. Assume that the vector bundle rank of  $A$ is $m.$ Let $d_A$ denote the coboundary operator of the Lie algebroid cohomology complex of $A$. Recall that a smooth section $\Pi \in \Gamma(\Lambda^n A) $ is said to be a {\it{Nambu structure}} of order $n$ $(n\in\mathbb{N},~3\leq n \leq m)$ on $A$ if 
$$ [\Pi^\sharp(\alpha), \Pi]^\sharp(\beta) = -\Pi^\sharp (\iota_{\Pi^\sharp\beta}d_A\alpha)~~~ \mbox{for any}~~~ \alpha, \beta  \in \Gamma(\Lambda^{n-1} A^\ast),$$ where $[~, ~]$ is the Schouten bracket on the graded commutative algebra $\Gamma (\Lambda^{\bullet} A),$ $\iota$ is the contraction operator and $\Pi^\sharp : \Lambda^{n-1} A^\ast \rightarrow A$ is the bundle map induced by $\Pi.$ 

It may be remarked that if $A$ is a Lie algebroid  equipped with  a Nambu structure of order $n$ such that every $d_A$-closed section  $M\longrightarrow A^*$ is locally $d_A$-exact, then the proof of Theorem \ref{NP-manifold-weak-Lie-algebroid-pair} may be adapted to show that the pair $(A, A^*)$ is weak Lie-Filippov algebroid pair of order $n$ over $M$.
\end{remark}

\section{Weak Lie-Filippov bialgebroid}\label{$6$}
The notion of a Lie bialgebroid was introduced by Mackenzie and Xu \cite{mac-xu}, as a generalization of both Poisson manifold and Lie bialgebra. Lie bialgebroids are the infinitesimal form of Poisson groupoids. 

Recall that a Lie bialgebroid is a pair $(A, A^\ast)$ of Lie algebroids in duality, satisfying the compatibility condition 
$$d_\ast[X, Y] = [d_\ast X, Y] + [X , d_\ast Y],$$ for all $X,~ Y \in \Gamma A.$
Here $d_\ast$ is the differential on $\Gamma (\Lambda^\bullet A)$ defined by the Lie algebroid structure of $A^\ast$ and $[~, ~]$ is the Gerstenhaber bracket  on $\Gamma (\Lambda^\bullet A)$ defined by the Lie algebroid structure on $A.$ Note that this condition is equivalent to the condition $(16)$ in \cite{mac-xu}.

Let $d_A$ be the coboundary operator in the Lie algebroid complex $\Gamma (\Lambda^\bullet A^\ast)$ of the Lie algebroid $A$ and $[~, ~]_\ast$ be the Gerstenhaber bracket in $\Gamma (\Lambda^\bullet A^\ast)$ extending the Lie bracket of $\Gamma A^\ast.$ Then the compatibility condition may be stated equivalently as 
$$d_A[\alpha, \beta]_\ast = [d_A\alpha, \beta]_\ast + [\alpha , d_A\beta]_\ast,$$ for all $\alpha,~ \beta \in \Gamma A^\ast$ (\cite{kosmann}, \cite{mac-xu}). 

The aim of this section is to address the question raised in the introduction about the existence of some notion of bialgebroid associated to Nambu-Poisson manifold of order $n>2$. To answer this, we introduce a notion which we call {\it weak Lie-Filippov bialgebroid} and show that such objects arise from Nambu-Poisson manifolds. Roughly speaking, a weak Lie-Filippov bialgebroid of order $n$ is a pair $(A, A^\ast)$ consisting of a Lie algebroid $A$ over a smooth manifold $M$ such that the space of sections $\Gamma A^*$ of the dual bundle $A^\ast$ admits an $n$-ary bracket making  $(A, A^*)$ a weak Lie-Filippov algebroid pair of order $n$ over $M$ and the Lie algebroid structure on $A$ and the induced Nambu-Gerstenhaber bracket on $\Gamma (\Lambda^\bullet A^*)$ are related by some suitable compatibility condition.  This compatibility condition may be viewed as a generalization of the equivalent compatibility condition for Lie bialgebroids as stated above. Finally, we prove some results which are consequences of this structure.

We begin with a motivating example. 
\begin{exam}\label{motivating-example}
Let $M$ be a Nambu-Poisson manifold of order $n$. Then by Theorem \ref{NP-manifold-weak-Lie-algebroid-pair}, in the previous section, we know that the pair $(TM, T^\ast M)$ is a weak Lie-Filippov algebroid pair of order $n$ over $M$. Moreover, by Proposition \ref{compatibility-d-n-gerstenhaber}, the induced Nambu-Gerstenhaber bracket (cf. Remark \ref {Nambu-Gerstenhaber-n-bracket}) on $\Omega^\bullet(M)$ satisfies
$$d[\alpha_1, \ldots, \alpha_n] = \sum_{i=1}^n [\alpha_1, \ldots, d\alpha_i, \ldots  \alpha_n],~ \alpha_i \in \Omega^1(M).$$
\end{exam}

\begin{defn}\label{defn-weak-lie-filippov-bialgebroid}
Let $A$ be a smooth vector bundle over a smooth manifold $M$. Then the pair $(A, A^\ast )$ is said to be a {\it weak Lie-Filippov bialgebroid} of order $n,$ $(n \geq 3$) over $M$, if 
\begin{enumerate}
\item $(A, A^*)$ is a weak Lie-Filippov algebroid pair of order $n$ over $M$;
\item the following compatibility condition holds.
$$ d_A[\alpha_1, \ldots , \alpha_n] = \sum_{i=1}^n [\alpha_1, \ldots , d_A\alpha_i, \ldots , \alpha_n],$$ for any $\alpha_i \in \Gamma A^\ast,$ $i \in \{1, \ldots , n\}$,  where $[~, \ldots , ~]$ is the  Nambu-Gestenhaber bracket on the graded algebra $\Gamma(\Lambda^\bullet A^\ast)$ corresponding to the weak Lie-Filippov algebroid pair $(A,A^\ast)$ over $M$.
\end{enumerate}
The anchor $\rho$ of the weak Lie-Filippov algebroid pair is called the anchor of the weak Lie-Filippov bialgebroid $(A, A^*).$ A weak Lie-Filippov bialgebra of order $n$ is a weak Lie-Filippov bialgebroid of order $n$ over a point.
\end{defn} 
\begin{remark}\label{remark-terminology}
Ideally, like the Poisson case, one might expect that for a Nambu-Poisson manifold $M$ of order $n,$ the pair $(TM, T^\ast M)$ would form a Lie-Filippov bialgebroid of order $n$ (a notion obtained as above by dropping the restriction `$d_A$-closed' in the defining conditions of Definition \ref{weak-Lie-Filippov-algebroid-pair}). Unfortunately, as remarked before (Remark \ref{terminology-weak}), due to the rigidity of Nambu structure, we end up getting a weaker notion.
\end{remark}
From Theorem \ref{NP-manifold-weak-Lie-algebroid-pair} and Proposition \ref{compatibility-d-n-gerstenhaber}, we deduce

\begin{corollary}\label{NP-manifold-weak-lie-filippov-bialgebroid}
Suppose $M$ is a Nambu-Poisson manifold of order $n,~ (n\geq 3).$ Then the pair $(TM, T^\ast M)$ is a weak Lie-Filippov bialgebroid of order $n$ over $M$.
\end{corollary}
The above fact has been illustrated by Example \ref{example-nambu}.

Next, we prove a result analogous to Theorem \ref{classification-weak-lie-filippov-algebroid-pair} for weak Lie-Filippov bialgebroids.  

Recall the following definitions from \cite{xu}.

\begin{defn}\label{strong-diff-gerstenhaber-algebra}
A {\it differential Gerstenhaber algebra} is a Gerstenhaber algebra 
$$(\mathcal A =\oplus_{i\in \mathbb Z}\mathcal A^i, \wedge, [~,~])$$ equipped with a differential $\delta$, which is a derivation of degree $1$ with respect to $\wedge$ and $\delta^2 = 0.$  It is called a {\it strong differential Gerstenhaber algebra} if, in addition, $\delta$ is a derivation of graded Lie bracket (cf. Definition \ref{gerstenhaber-algebra}).
\end{defn}

Given a smooth vector bundle $A$ over $M$, there is a one-to-one correspondence between Lie bialgebroid pairs $(A,A^\ast)$ and strong differential Gerstenhaber algebra structures on $\Gamma(\Lambda^\bullet A)$ (\cite{xu}). Note that Lie bialgebroid is a dual concept, in the sense that if $(A,A^\ast)$ is a Lie bialgebroid so is $(A^\ast, A)$ (\cite{mac-xu,kosmann}). Hence, there is a one-to-one correspondence between Lie bialgebroids $(A,A^\ast)$  and strong differential Gerstenhaber structures on $\Gamma(\Lambda^\bullet A^\ast).$ 

We prove an $n$-ary version of the above result by introducing a notion of {\it strong Nambu-Gerstenhaber algebra of order $n$}. 

\begin{defn}\label{strong-Nambu-Gerstenhaber-n-algebra}
A Nambu-Gerstenhaber algebra $(\mathcal A, \wedge, [~, \ldots , ~], \delta)$ of order $n$ is said to be a {\it strong Nambu-Gerstenhaber algebra of order $n$} if the differential $\delta$ on $\mathcal A$, is a derivation of the graded Nambu-Gerstenhaber $n$-ary bracket on $\mathcal A$.
\end{defn}

The following result is immediate from Theorem \ref{classification-weak-lie-filippov-algebroid-pair} and Definitions \ref{defn-weak-lie-filippov-bialgebroid}, \ref{strong-Nambu-Gerstenhaber-n-algebra}.

\begin{prop}\label{classification-weak-lie-filippov-strong-nambu-gerstenhaber}
Let $A$ be a smooth vector bundle over a smooth manifold $M$. Then $(A, A^\ast)$ is a weak Lie-Filippov bialgebroid of order $n$ if and only if $(\Gamma(\Lambda^\bullet A^\ast), \wedge)$ is a strong Nambu-Gerstenhaber algebra of order $n$. 
\end{prop}

In \cite{mac-xu,kosmann}, the authors showed that if $A$ is a smooth vector bundle over a smooth manifold $M$ such that $(A, A^\ast)$ is a Lie bialgebroid, then there is a canonical Poisson structure on the base manifold $M$. It is interesting to investigate this aspect in the case when $(A, A^\ast)$ is a weak Lie-Filippov bialgebroid. To this end, we need the following useful lemma.

\begin{lemma}\label{compatibility-f}
Let $(A, A^\ast)$ be a weak Lie-Filippov bialgebroid of order $n$ over a smooth manifold $M$. Let $[~, \ldots ,~]$ be the induced Nambu-Gerstenhaber $n$-ary bracket on $\Gamma (\Lambda^\bullet A^\ast).$ For any $\alpha_i \in \Gamma A^\ast,~ i\in \{1, \ldots , n-1\}$ and $f \in C^\infty(M)$, we have
$$ d_A [ \alpha_1, \ldots ,\alpha_{n-1}, f] = \sum_{i=1}^{n-1}[\alpha_1, \ldots, d_A\alpha_i, \ldots , \alpha_{n-1}, f] + [\alpha_1, \ldots , \alpha_{n-1}, d_Af].$$
\end{lemma}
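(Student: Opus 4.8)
The plan is to deduce the identity from the compatibility condition (2) of Definition~\ref{defn-lie-filippov-bialgebroid}, which a priori is available only when all $n$ arguments of the bracket are sections of $A^\ast$, by replacing the scalar $f$ in the last slot by a product $f\alpha_n$ with $\alpha_n\in\Gamma A^\ast$ arbitrary. First I would apply the derivation axiom (iv) of Definition~\ref{nambu-n-gerstenhaber-algebra} to $f\alpha_n=f\wedge\alpha_n$: since every $\alpha_i$ has degree one, the sign exponent there is zero, so
\[
[\alpha_1,\ldots,\alpha_{n-1},f\alpha_n]=[\alpha_1,\ldots,\alpha_{n-1},f]\,\alpha_n+f\,[\alpha_1,\ldots,\alpha_{n-1},\alpha_n],
\]
where $[\alpha_1,\ldots,\alpha_{n-1},f]=\rho(\alpha_1\wedge\cdots\wedge\alpha_{n-1})(f)\in C^\infty(M)$ is, by construction, the value of the induced Nambu-Gerstenhaber bracket on a degree-zero entry.

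Next I would apply $d_A$ to both sides. On the left, $f\alpha_n\in\Gamma A^\ast$, so condition (2) of Definition~\ref{defn-lie-filippov-bialgebroid} applies directly; after expanding $d_A(f\alpha_n)=d_Af\wedge\alpha_n+f\,d_A\alpha_n$ and using axiom (iv) once more on each of the resulting brackets, the left-hand side becomes an explicit sum of six terms. (All sign exponents are again zero, since in every slot the entry is either some $\alpha_j$ or a single $d_A\alpha_i$, while the companion wedge factor is $f$ or $d_Af$.) On the right, $d_A$ is a derivation of degree one for $\wedge$ on $\Gamma(\Lambda^\bullet A^\ast)$, so the Leibniz rule produces four terms. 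Comparing the two expansions, the summands $[\alpha_1,\ldots,\alpha_{n-1},f]\,d_A\alpha_n$ and $d_Af\wedge[\alpha_1,\ldots,\alpha_{n-1},\alpha_n]$ occur on both sides and cancel; and, rewriting the remaining right-hand term $f\,d_A[\alpha_1,\ldots,\alpha_{n-1},\alpha_n]$ via condition (2) for the all-degree-one bracket $[\alpha_1,\ldots,\alpha_{n-1},\alpha_n]$, it cancels two further terms on the left. What survives is precisely
\[
\Big(d_A[\alpha_1,\ldots,\alpha_{n-1},f]-\sum_{i=1}^{n-1}[\alpha_1,\ldots,d_A\alpha_i,\ldots,\alpha_{n-1},f]-[\alpha_1,\ldots,\alpha_{n-1},d_Af]\Big)\wedge\alpha_n=0.
\]

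Finally, $\alpha_n\in\Gamma A^\ast$ was arbitrary, so this asserts $\eta\wedge\alpha=0$ for all $\alpha\in\Gamma A^\ast$, where $\eta\in\Gamma A^\ast$ is the $1$-form in parentheses; such an $\eta$ must vanish (pointwise: if $\eta_m\neq0$ and $\operatorname{rank}A\geq2$, choose a section $\alpha$ with $\alpha_m$ linearly independent of $\eta_m$ to contradict $\eta\wedge\alpha=0$; while if $\operatorname{rank}A=1$ then $\rho=0$, $d_A$ annihilates $\Gamma A^\ast$, and the claimed identity reads $0=0$). Hence $\eta=0$, which is the assertion. I expect the only real obstacle to be the bookkeeping in the middle step: checking that, once condition (2) has been applied to the degree-one brackets, every term except the claimed identity wedged with $\alpha_n$ cancels. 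The sign-chasing is mercifully trivial here, since all degree exponents appearing in axiom (iv) vanish, and the concluding ``division by $\alpha_n$'' uses only the elementary fact about exterior products just recalled.
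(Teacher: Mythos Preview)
Your argument is correct and is essentially the same as the paper's: both replace $f$ by $f\alpha_n$ with $\alpha_n\in\Gamma A^\ast$ arbitrary, expand via the derivation axiom and the compatibility condition, cancel, and conclude from the arbitrariness of $\alpha_n$. You supply a bit more justification for the final ``division by $\alpha_n$'' step (including the rank-one edge case), which the paper simply asserts.
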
  

\begin{proof}
Let $\alpha_n \in \Gamma A^\ast.$  Note that 
$$ d_A [ \alpha_1, \ldots , f\alpha_n] =  \sum_{i=1}^{n-1}[\alpha_1, \ldots,d_A\alpha_i, \ldots , f\alpha_n]+ [\alpha_1, \ldots , \alpha_{n-1}, d_A(f\alpha_n)].$$
We simplify the left hand side as
\begin{align*}
d_A [ \alpha_1, \ldots , f\alpha_n] 
&=  d_A([\alpha_1, \ldots , f] \wedge \alpha_n + f [\alpha_1, \ldots , \alpha_n])\\
&= d_A[\alpha_1, \ldots , \alpha_{n-1}, f] \wedge \alpha_n + [\alpha_1, \ldots , \alpha_{n-1}, f] d_A \alpha_n\\ 
&+ d_Af \wedge [\alpha_1, \ldots , \alpha_n] + f d_A[\alpha_1, \ldots , \alpha_n].
\end{align*}
On the other hand, the right hand side can be written as
\begin{align*}
& \sum_{i=1}^{n-1}[\alpha_1, \ldots,d_A\alpha_i, \ldots , f\alpha_n]+ [\alpha_1, \ldots , \alpha_{n-1}, d_A(f\alpha_n)]\\
&= \sum_{i=1}^{n-1}[\alpha_1, \ldots,d_A\alpha_i, \ldots , f]\wedge \alpha_n + \sum_{i=1}^{n-1}f[\alpha_1, \ldots,d_A\alpha_i, \ldots , \alpha_n]\\
&+ [\alpha_1, \ldots , \alpha_{n-1}, d_Af] \wedge \alpha_n + d_Af \wedge [\alpha_1, \ldots , \alpha_n]\\
&+ [\alpha_1, \ldots , \alpha_{n-1}, f]d_A\alpha_n + f[\alpha_1, \ldots , d_A\alpha_n].
\end{align*}

Comparing right hand sides of the above equalities we deduce
$$d_A[\alpha_1, \ldots , \alpha_{n-1}, f] \wedge \alpha_n= (\sum_{i=1}^{n-1}[\alpha_1, \ldots,d_A\alpha_i, \ldots , f]+ [\alpha_1, \ldots , \alpha_{n-1}, d_Af]) \wedge \alpha_n.$$ 
Thus, the result follows as $\alpha_n$ is arbitrary. 
\end{proof} 

In order to state the next theorem we need to formulate a notion of morphisms between weak Lie-Filippov bialgebroids. 
\begin{defn}\label{definition-bialgebroid-morphism}
Suppose $(A,A^\ast)$ and $(B,B^\ast)$ are two weak Lie-Filippov bialgebroids of order $n$ over $M$. A weak Lie-Filippov bialgebroid morphism $\phi : (A,A^\ast) \to (B,B^\ast)$ is a morphism $\phi:A\to B$ of Lie algebroids so that $\phi^\ast : B^\ast \to A^\ast$ preserves the $n$-ary brackets on the respective spaces of smooth sections and commutes with the anchors of $(A,A^\ast)$ and $(B,B^\ast).$  
\end{defn}

\begin{thm}\label{weak-lie-filippov-nambu-poisson-on base}
Let $(A, A^\ast)$ be a weak Lie-Filippov bialgebroid of order $n$ over a smooth \linebreak manifold $M$ for $n\geq 3$. Then there is a canonical Nambu-Poisson structure of order $n$ on $M$ such that the anchor $a : A \to TM$ of the Lie algebroid $A$ is a morphism of weak Lie-Filippov bialgebroids $(A, A^*) \to (TM, T^*M).$ Moreover, if there is a morphism $(A,A^\ast) \to (B,B^\ast)$ of weak Lie-Filippov bialgebroids over $M,$ then the corresponding induced Nambu-Poisson structures on $M$ are the same. 
\end{thm} 
\begin{proof}
Define an $n$-bracket on $C^\infty(M)$ by
$$\{f_1, \ldots , f_n\} := \rho (d_Af_1 \wedge \cdots \wedge d_Af_{n-1})f_n = [d_Af_1, \ldots , d_Af_{n-1}, f_n],$$
where $d_A$ is the coboundary operator of the cochain complex $\Gamma ( \Lambda^\bullet A^\ast)$, $\rho$ is the anchor of the weak Lie-Filippov bialgebroid  $(A, A^\ast)$ and $[~, \ldots , ~]$ is the induced Nambu-Gerstenhaber bracket on $\Gamma (\Lambda^\bullet A^\ast).$

From Lemma \ref{compatibility-f}, it follows that 
\begin{eqnarray}\label{eq-5.1}
d_A\{f_1, \ldots , f_n\} & = & [d_Af_1, \ldots , d_Af_n].
\end{eqnarray}
Clearly, the bracket is skew-symmetric in the first $(n-1)$ entries. Thus, to prove the skew-symmetric property of the above defined bracket, it is enough to prove 
$$\{f_1, \ldots , f_{n-2}, f_{n-1}, f_n\}= -\{f_1, \ldots , f_{n-2}, f_n, f_{n-1}\}.$$ The last assertion is equivalent to proving $\{f_1, \ldots , f_{n-2}, f, f\}= 0,$ for any $f_1, \ldots,  f_{n-2}, f \in C^\infty(M),$ which we check below.
Note that for any $f_1, \ldots,  f_{n-2}, f \in C^\infty(M),$
\begin{eqnarray}\label{eq-5.2}
d_A\{f_1, \ldots , f_{n-2}, f, f\} & = & [d_Af_1, \ldots , d_Af_{n-2}, d_Af, d_Af] = 0.  
\end{eqnarray}
In particular, replacing $f$ by $f^2$ we get 
$$d_A\{f_1, \ldots , f_{n-2}, f^2, f^2\} = 0.$$ By definition of the bracket we obtain $d_A(\rho ( d_Af_1 \wedge \cdots \wedge d_Af^2)(f^2)) = 0.$ 
Now, we use the derivation property of $d_A$ and that of a vector field to deduce 
$$d_A (f\rho ( d_Af_1 \wedge \cdots \wedge d_Af)(f^2)) = 0, ~~~d_A (f^2\rho ( d_Af_1 \wedge \cdots \wedge d_Af)(f)) = 0.$$ 
The last equality implies $d_A (f^2)\rho (d_Af_1 \wedge \cdots \wedge d_Af)(f) + f^2 d_A(\rho (d_Af_1 \wedge \cdots \wedge d_Af)(f)) = 0.$\\
By definition of the $n$-ary bracket and from equation \eqref{eq-5.2}, we see that 
$$d_A(\rho ( d_Af_1 \wedge \cdots \wedge d_Af)(f)) = 0.$$ 
Therefore, we get $\rho ( d_Af_1 \wedge \cdots \wedge d_Af)(f)d_A(f^2) = 0$ or $f\rho ( d_Af_1 \wedge \cdots \wedge d_Af)(f)d_Af = 0.$\\  
Taking wedge product with $d_Af_1 \wedge \cdots \wedge d_Af_{n-2}$, the last equality yields 
$$f\rho ( d_Af_1 \wedge \cdots \wedge d_Af)(f)d_Af_1 \wedge \cdots \wedge d_Af_{n-2} \wedge d_Af = 0.$$ 
This means, $f\rho ( d_Af_1 \wedge \cdots \wedge d_Af)(f) \rho (d_Af_1 \wedge \cdots \wedge d_Af_{n-2} \wedge d_Af)(f) = 0.$ In other words, $f(\rho ( d_Af_1 \wedge \cdots \wedge d_Af)(f))^2 = 0.$ Let $u=(\rho ( d_Af_1 \wedge \cdots \wedge d_Af)(f))^2$. Thus, we have either $f(x)=0$ or $u(x)=0$ for every $x$. Note that $u(x)=0$ if $f(x)=0$ in an open neighborhood of $x$ or if $f(x)\neq 0$. Otherwise, $f(x)=0$ and $f(x_n)\neq0$ for a sequence $x_n \to x$ as $n\to \infty$. It follows that $u(x_n)=0$ and hence, $u(x)=0$ by continuity. Therefore, in all cases
$$ \rho ( d_Af_1 \wedge \cdots \wedge d_Af)(f) = \{f_1, \ldots , f_{n-2}, f, f\} = 0.$$
 
From the definition of the bracket, it is clear that it satisfies the derivation property. We prove the fundamental identity for the bracket using Hamiltonian formalism. For $f_1, \ldots , f_{n-1} \in C^\infty(M),$  define a vector field $X_{f_1 \ldots  f_{n-1}}$ by
$$ X_{f_1 \ldots f_{n-1}} = \rho (d_Af_1 \wedge \cdots \wedge d_Af_{n-1}).$$
Then for $g \in C^\infty(M),$
$$X_{f_1 \ldots  f_{n-1}}(g) = \rho (d_Af_1 \wedge \cdots \wedge d_Af_{n-1})(g) = \{f_1, \ldots , f_{n-1}, g\}.$$
Note that for  $f_1, \ldots , f_{n-1} \in C^\infty(M),$ and  $g_1, \ldots , g_{n-1} \in C^\infty(M),$
\begin{align*}
[X_{f_1 \ldots  f_{n-1}}, X_{g_1 \ldots  g_{n-1}}] 
& = [\rho (d_Af_1 \wedge \cdots \wedge d_Af_{n-1}), \rho (d_Ag_1 \wedge \cdots \wedge d_Ag_{n-1})]\\
& = \sum_{i=1}^{n-1}\rho (d_Ag_1 \wedge \cdots \wedge [d_Af_1, \ldots, d_Af_{n-1}, d_Ag_i] \wedge \cdots \wedge d_Ag_{n-1})\\
& = \sum_{i=1}^{n-1}\rho (d_Ag_1 \wedge \cdots \wedge d_A\{f_1, \ldots, f_{n-1}, g_i\} \wedge \cdots \wedge d_Ag_{n-1})\\
& = \sum_{i=1}^{n-1}X_{g_1 \ldots  \{f_1, \ldots, f_{n-1}, g_i\} \ldots  g_{n-1}}.
\end{align*}
Hence,
\begin{align*}
& \{f_1, \ldots , f_{n-1}, \{g_1, \ldots, g_n\}\} - \sum_{i=1}^n\{g_1, \ldots, \{f_1, \ldots , f_{n-1}, g_i\}, \ldots , g_n\}\\
& = X_{f_1 \ldots  f_{n-1}} X_{g_1 \ldots  g_{n-1}}(g_n) - X_{g_1 \ldots  g_{n-1}}X_{f_1 \ldots  f_{n-1}}(g_n) - \sum_{i=1}^{n-1} X_{g_1 \ldots  \{f_1, \ldots, f_{n-1}, g_i\} \ldots  g_{n-1}}(g_n)\\
& = [X_{f_1 \ldots  f_{n-1}}, X_{g_1 \ldots  g_{n-1}}](g_n) - \sum_{i=1}^{n-1} X_{g_1 \ldots  \{f_1, \ldots, f_{n-1}, g_i\} \ldots  g_{n-1}}(g_n) = 0.
\end{align*}
Therefore, we conclude that $M$ has a Nambu-Poisson structure of order $n$. Next we verify that the anchor $a:A\to TM$ of the Lie algebroid $A$ is, in fact, a morphism $(A, A^*) \to (TM, T^* M)$ of weak Lie-Filippov bialgebroids. Note that $a^\ast(df)=d_Af$. The anchor for any Lie algebroid is a Lie algebroid morphism, thus, it suffices to prove that for $1$-forms $\alpha_i$
$$a^\ast [\alpha_1,\ldots,\alpha_n]= [a^\ast \alpha_1,\ldots, a^\ast \alpha_n]$$ and
$$ \rho (a^\ast(\alpha_1\wedge \cdots \wedge \alpha_{n-1})) = P^\sharp (\alpha_1\wedge \cdots \wedge \alpha_{n-1}), $$ where $P$ is the Nambu-Poisson tensor on $M$  induced from the weak Lie-Filippov bialgebroid $(A, A^*).$  
For the second equation note that the anchor maps are vector bundle morphisms and hence, $C^\infty(M)$-linear. Therefore, it suffices to prove the equation for $\alpha_i=df_i$ for which 
\begin{align*}
& \rho (a^\ast(df_1\wedge \cdots \wedge df_{n-1}))(f_n) =\rho (d_Af_1\wedge \cdots \wedge d_Af_{n-1})(f_n)\\
&= \{f_1,\ldots,f_n\} = P^\sharp (df_1\wedge \ldots \wedge df_{n-1})(f_n).
\end{align*}
For the first equation, we may write each $\alpha_i$ as a linear combination of $f_idg_i$, and, expand both sides by linearity and the derivation property of the $n$-bracket. Note that $a^\ast$ is $C^\infty(M)$-linear and thus, it suffices to verify the equation for $\alpha_i =df_i$. We have 
\begin{displaymath}
a^\ast [df_1,\ldots,df_n] =a^\ast d \{f_1,\ldots,f_n\} =d_A \{f_1,\ldots,f_n\} = [ d_Af_1,\ldots, d_Af_n] =[a^\ast df_1,\ldots, a^\ast df_n].
\end{displaymath}

It remains to show that the induced Nambu-Poisson structure is independent up to morphisms of weak Lie-Filippov bialgebroids. Suppose that there is a morphism of weak Lie-Filippov bialgebroids $\phi: (A,A^\ast) \to (B,B^\ast)$ over $M$. Let us denote by $a_A,$ the anchor for any Lie algebroid $A$ and by $\rho_{A^\ast},$ the anchor for any weak Lie-Filippov bialgebroid $(A, A^\ast).$  It implies the existence of a commutative diagram of weak Lie-Filippov bialgebroids 
$$ \xymatrix{ (A,A^\ast) \ar[r]^{\phi} \ar[rd]_{a_B \circ \phi = a_A} & (B,B^\ast) \ar[d]^{a_B} \\
& (TM,T^\ast M)}$$
with an induced diagram of vector bundles
$$ \xymatrix{ T^\ast M \ar[rd]_{a_A^\ast} \ar[r]^{a_B^*} & B^\ast \ar[d]^{\phi^*} \\
& A^\ast}$$
Let $\{,\ldots,\}_{(A, A^*)}$ (respectively,  $\{,\ldots,\}_{(B,B^*)}$) be the Nambu-Poisson structures induced from $(A, A^*)$ (respectively, $(B, B^*)$). Using the commutative diagrams above we get
\begin{align*}
\{f_1,\ldots,f_n\}_{(A, A^*)}
& = \rho_{A^\ast}(d_Af_1\wedge  \cdots \wedge d_Af_{n-1})(f_n) \\
&= \rho_{A^\ast}(a_A^\ast df_1 \wedge  \cdots \wedge a_A^\ast df_{n-1})(f_n)\\
&= \rho_{A^\ast}(\phi^\ast a_B^\ast df_1 \wedge  \cdots \wedge \phi^\ast a_B^\ast df_{n-1})(f_n)\\
&= \rho_{A^\ast}\phi^\ast (a_B^\ast df_1 \wedge  \cdots \wedge a_B^\ast df_{n-1})(f_n)\\
&= \rho_{B^\ast}(d_Bf_1 \wedge  \cdots \wedge  d_Bf_{n-1})(f_n)\\
&=\{f_1,\ldots,f_n\}_{(B, B^*)} .
\end{align*}
This completes the proof of the theorem. 
\end{proof}

\begin{remark}
Observe that for a Nambu-Poisson manifold $M$ of order $n$, if we consider the weak Lie-Filippov bialgebroid $(TM, T^\ast M)$ of Example \ref{motivating-example}, then its induced Nambu-Poisson structure on $M$ coincides with the given Nambu-Poisson structure on $M$.
\end{remark}
\vspace*{0.5cm}

\section{Examples from Nambu-Lie group}\label{$7$}
It is well known that Lie bialgebroids are the infinitesimal form of Poisson groupoids \cite{mac}, just like, Lie bialgebras are the infinitesimal form of Poisson-Lie groups \cite{lu-weinstein}. A Poisson-Lie group is a Lie group together with a multiplicative Poisson structure. Following the Poisson-Lie case, in \cite{vaisman}, the author introduced the notion of a Nambu-Lie group as a Lie group endowed with a multiplicative Nambu structure. More precisely, a Lie group $G$ is a Nambu-Lie group if it admits a Nambu structure $P \in \Gamma(\Lambda^nTG)$ of order $n$ satisfying
$$P(gh) = (l_g)_*P(h) + (r_h)_*P(g),$$ where $l_g$ and $r_h$ denote the left translation by $g$ and right translation by $h$ in $G$, respectively. The multiplicativity of $P$ implies $P(e)= 0,$ where $e$ is the identity element of $G$. 

Recall that \cite{gra-mar,vaisman} the {\it intrinsic derivative} of $P$ at $e,$ is the linear map
$$\delta = d_eP : \mathfrak g \longrightarrow \Lambda^n \mathfrak g$$ defined by
$$ X\mapsto (\mathcal L_{\overline{X}}P)(e)$$ where $\mathfrak g$ is the Lie algebra of $G$ and $\overline{X}$ is any vector field whose value at $e$ is $X$. Then by Theorem $(2.2)$, \cite{vaisman}, $\delta$ is a $1$-cocycle of $\mathfrak g$ with coefficients in the adjoint representation on $\Lambda ^n\mathfrak g.$ Moreover, the dual $\delta^*$ of $\delta$ defines an $n$-ary operation 
$$[~, \ldots, ~] = \delta^* : \Lambda^n \mathfrak g^* \longrightarrow \mathfrak g^*$$ on $\mathfrak g^*$  which makes it a Filippov algebra of order $n.$ 

Recall \cite {ciccoli} that a Lie-Filippov bialgebra of order $n$ consists of a pair $(\mathfrak g, \mathfrak g^\ast)$ where $\mathfrak g$ is a Lie algebra and $\mathfrak g^\ast$ is a Filippov algebra of order $n$ such that the map $\delta : \mathfrak g \rightarrow \Lambda^n \mathfrak g$ obtained by dualizing the Filippov bracket on $\mathfrak g^\ast$ is a Lie algebra $1$-cocycle of $\mathfrak g$ with  coefficients in the adjoint representation on $\Lambda^n \mathfrak g.$ 
Thus, for a Nambu-Lie group $(G, P)$ of order $n$ with Lie algebra $\mathfrak g,$ the pair $(\mathfrak g , \mathfrak g^\ast)$ is a Lie-Filippov bialgebra of order $n,$ called the tangent Lie-Filippov bialgebra of $G$.

Explicitly, the Filippov $n$-bracket $[~, \ldots, ~]$ on $\mathfrak{g}^*$ is given by the Nambu form-bracket as
$$ [\alpha_1, \ldots, \alpha_n] = [\widetilde{\alpha_1}, \ldots, \widetilde{\alpha_n}] (e),$$
for $\alpha_i \in \mathfrak{g}^*$ and $\widetilde{\alpha_i}$'s are left invariant $1$-forms on $G$ with $\widetilde{\alpha_i} (e) = \alpha_i$, for all
$i = 1, \ldots, n$ \cite{vaisman}.

Let us look at the following example.
\begin{exam}\label{example-heisenberg}
Consider the Heisenberg group
$$   H (1,1) = \{ \left( \begin{array}{ccc}
1 & x & z\\ 
0 & 1 & y \\
0 & 0 & 1
\end{array}
\right) |~  x, y, z \in \mathbb{R} \} $$
with the matrix multiplication as the group operation. Then the $3$-tensor $P = y \frac{\partial}{\partial x}  \wedge \frac{\partial}{ \partial z}  \wedge \frac{\partial}{\partial y}$
defines a Nambu-Lie group structure on $H (1,1)$. The left invariant $1$-forms on $H(1,1)$ are $dx,~ dy,~ dz - x dy$ \cite{vaisman}. Hence, the dual $\mathfrak{g}^*$
of the Lie algebra $\mathfrak{g}$ of the Heisenberg group is generated by the above left invariant $1$-forms evaluated at the identity. Moreover, the Filippov
algebra bracket on $\mathfrak{g}^*$ is given by
\begin{align*}
 [dx, dy, dz - x dy] =& [dx, dy, dz] - [dx, dy, x dy] \\
=& d \{x, y, z \} - x ~ d \{x, y, y\} - P^\sharp (dx \wedge dy)(x) dy \\
=& - dy + y \frac{\partial x}{\partial z} dy = - dy
\end{align*}
and zero otherwise.
\end{exam}

The following result is a characterization of Lie-Filippov bialgebra. 
\begin{prop}
Let $\mathfrak g$ be a Lie algebra such that $\mathfrak g^\ast$ admits an $n$-ary bracket $[~, \ldots, ~]$ making it a Filippov algebra. Then the pair  $(\mathfrak g , \mathfrak g^\ast)$ is a Lie-Filippov bialgebra of order $n$ if and only if for any $\alpha_1, \ldots, \alpha_n \in \mathfrak g^*$  
$$d_{\mathfrak g}([\alpha_1, \ldots,  \alpha_n]) = \sum_{i=1}^n [\alpha_1, \ldots, d_{\mathfrak g}\alpha_i, \ldots,  \alpha_n],$$ where the bracket on the right hand side is the extended $n$-Gerstenhaber algebra bracket on $\Lambda^\bullet \mathfrak g^*$ (cf. Example \ref{lie-n-gerstenhaber}) induced from the Filippov algebra structure on $\mathfrak{g}^*,$ $d_{\mathfrak g}$ being the differential of the Chevalley-Eilenberg complex  $\Lambda^n\mathfrak g^\ast.$
\end{prop}

\begin{proof}
Assume that the pair $(\mathfrak g, \mathfrak g^\ast)$ is a Lie-Filippov bialgebra of order $n.$  Let $\delta$ be the dual of 
$$[~,\ldots, ~]: \underbrace{\mathfrak g^\ast \wedge \cdots \wedge \mathfrak g^\ast}_{n\,\textup{copies}} \longrightarrow \mathfrak g^\ast.$$
Observe that the cocycle condition of $\delta$ may be reformulated as follows. For any $\alpha_1, \ldots, \alpha_n\in \mathfrak g^*$ and $X, Y \in \mathfrak g, $
$$\langle [\alpha_1, \ldots, \alpha_n], [X, Y]\rangle = \sum_{i=1}^n [\langle [\alpha_1, \ldots, ad_Y^*\alpha_i, \ldots,  \alpha_n], X\rangle -\langle [\alpha_1, \ldots, ad_X^*\alpha_i, \ldots,  \alpha_n], Y\rangle].$$
Therefore,
\begin{align*}
& d_{\mathfrak g}([\alpha_1, \ldots,  \alpha_n])(X, Y)\\
& = -\langle [\alpha_1, \ldots, \alpha_n], [X, Y] \rangle\\
& = \sum_{i=1}^n [\langle [\alpha_1, \ldots, ad_X^*\alpha_i, \ldots,  \alpha_n], Y\rangle -\langle [\alpha_1, \ldots, ad_Y^*\alpha_i, \ldots,  \alpha_n], X\rangle]\\
& = \sum_{i=1}^n [\langle [\alpha_1, \ldots, \iota_Xd_{\mathfrak g}\alpha_i, \ldots,  \alpha_n], Y\rangle -\langle [\alpha_1, \ldots, \iota_Yd_{\mathfrak g}\alpha_i, \ldots,  \alpha_n], X\rangle]
\end{align*}
since $ad^*_X\alpha _i = \iota_Xd_{\mathfrak g}\alpha_i$ for any $X \in \mathfrak g.$ To complete the proof, we need to show that the last equality is the same as
$$\sum_{i=1}^n [\alpha_1, \ldots, d_{\mathfrak g}\alpha_i, \ldots, \alpha_n](X, Y).$$ It is enough to establish this for $d_{\mathfrak g}\alpha_i$ of the form $\beta_i\wedge \gamma_i \in \Lambda^2\mathfrak g^*.$ Assuming $d_{\mathfrak g}\alpha_i= \beta_i\wedge \gamma_i,$ notice that 
$$\iota_Xd_{\mathfrak g}\alpha_i =  \iota_X(\beta_i\wedge \gamma_i) = \beta_i(X)\gamma_i - \gamma_i(X)\beta_i.$$ 
Therefore,
\begin{align*}
& \sum_{i=1}^n (\langle [\alpha_1, \ldots, \iota_Xd_{\mathfrak g}\alpha_i, \ldots,  \alpha_n], Y\rangle -\langle [\alpha_1, \ldots, \iota_Yd_{\mathfrak g}\alpha_i, \ldots,  \alpha_n], X\rangle)\\
& = \sum_{i=1}^n (\beta_i(X)\langle [\alpha_1, \ldots, \gamma_i, \ldots,  \alpha_n], Y\rangle - \gamma_i(X)\langle [\alpha_1, \ldots, \beta_i, \ldots,  \alpha_n], Y\rangle)\\
& - \sum_{i=1}^n (\beta_i(Y)\langle [\alpha_1, \ldots, \gamma_i, \ldots,  \alpha_n], X \rangle - \gamma_i(Y)\langle [\alpha_1, \ldots, \beta_i, \ldots,  \alpha_n], X\rangle)\\
& = \sum_{i=1}^n(\beta_i\wedge [\alpha_1, \ldots, \gamma_i, \ldots,  \alpha_n]+ [\alpha_1, \ldots, \beta_i, \ldots,  \alpha_n]\wedge \gamma_i)(X, Y)\\
& = \sum_{i=1}^n[\alpha_1, \ldots, \beta_i \wedge\gamma_i, \ldots,  \alpha_n](X, Y)\\
& = \sum_{i=1}^n[\alpha_1, \ldots, d_{\mathfrak g}\alpha_i, \ldots,  \alpha_n](X, Y).
\end{align*}
Hence, $d_{\mathfrak g}([\alpha_1, \ldots,  \alpha_n]) = \sum_{i=1}^n [\alpha_1, \ldots, d_{\mathfrak g}\alpha_i, \ldots,  \alpha_n].$ 

Conversely, suppose that $\mathfrak g$ is a Lie algebra for which there is an $n$-ary operation $[~, \ldots, ~]$, on $\mathfrak g^\ast$ making it an $n$-Lie algebra. Furthermore, assume that for any $\alpha_1, \ldots, \alpha_n\in \mathfrak g^*,$
$$d_{\mathfrak g}([\alpha_1, \ldots,  \alpha_n]) = \sum_i [\alpha_1, \ldots, d_{\mathfrak g}\alpha_i, \ldots,  \alpha_n]$$ holds. Then it is easy to check that $\delta$ is a $1$-cocycle. Therefore, the pair $(\mathfrak g, \mathfrak g^\ast)$ is a Lie-Filippov bialgebra of order $n.$ 
This completes the proof.\end{proof} 

\begin{exam}
Observe from Definition \ref{defn-weak-lie-filippov-bialgebroid} that a weak Lie-Filippov bialgebra of order $n$ is a Lie algebra $\mathfrak g$ together with an $n$-ary operation
$$[~,\ldots,~] \colon \underbrace{\mathfrak g^\ast \times \cdots \times  \mathfrak g^\ast}_{n\,\textup{times}} \longrightarrow \mathfrak g^\ast$$ 
which is skew-symmetric, i.e., $[\alpha_{\sigma(1)},\ldots,\alpha_{\sigma(n)}] = \textup{sign}(\sigma) [\alpha_1,\ldots, \alpha_n]$
for $\sigma\in\Sigma_n$ and satisfies the identity 
$$[\alpha_1, \ldots, \alpha_{n-1},[\beta_1, \ldots , \beta_n]] = \sum_{i=1}^n[ \beta_1, \ldots, \beta_{i-1},[\alpha_1,\ldots, \alpha_{n-1}, \beta_i], \ldots, \beta_n],$$ for all  $d_{\mathfrak g}$-closed $\alpha_i \in \mathfrak g^\ast,$ $1 \leq i \leq n-1$ and for any $\beta_j\in \mathfrak g ^\ast,~~ 1\leq j \leq n$ such that
$$d_{\mathfrak g}([\alpha_1, \ldots,  \alpha_n]) = \sum_{i=1}^n [\alpha_1, \ldots, d_{\mathfrak g}\alpha_i, \ldots,  \alpha_n]$$ holds for all $\alpha_1, \ldots , \alpha_n \in \mathfrak g^\ast.$ 

Any Lie-Filippov bialgebra of order $n$ is clearly a weak Lie-Filippov bialgebra of order $n.$ Hence, every Nambu-Lie group of order $n$ gives rise to example of weak Lie-Filippov bialgebroid of order $n.$
\end{exam}

\begin{remark}
Let  $\delta : \mathfrak{g} \rightarrow \bigwedge^2 \mathfrak{g}$ be any $1$-cocycle of the Lie algebra $\mathfrak{g}$ (with representation in $\bigwedge^2 \mathfrak{g}$) of a connected and simply connected Lie group $G.$ It is known \cite{lu-thesis, lu-weinstein } that $\delta$ integrates to a unique multiplicative bivector $\pi_G$ on $G$ such that the intrinsic derivative
$d_e \pi_G$  is $\delta$. In the case when $\delta^* : \bigwedge^2 \mathfrak{g}^* \rightarrow \mathfrak{g}^*$ defines a Lie algebra structure on $\mathfrak{g}^*,$ so that the pair $(\mathfrak{g}, \mathfrak{g}^*)$ is a Lie bialgebra, the multiplicative bivector field turns out to be a Poisson bivector field and hence, defines a Poisson Lie group structure
on $G$.

It is also known \cite{vaisman} that given a connected and simply connected Lie group $G$ with Lie algebra $\mathfrak{g}$, any $1$-cocycle
$\delta : \mathfrak{g} \rightarrow \bigwedge^n \mathfrak{g}$ integrates to a multiplicative $n$-vector field $\Pi$ on $G$. However, even if $\delta^* : \bigwedge^n \mathfrak{g}^* \rightarrow \mathfrak{g}^*$ satisfies the fundamental identity so that $\delta^*$ defines a Filippov algebra structure on $\mathfrak{g}^*,$ the induced $n$-vector field $P$ on $G$ need not be locally decomposable, and hence, need not define a Nambu-Lie group structure on $G.$
\end{remark}

We end with the following remark suggesting a list of some related problems of interest for further investigation.
\begin{remark}\label{problems-interest}
\begin{enumerate}
\item In \cite{liu-xu}, the authors studied the local structure of Lie bialgebroids at regular points. In particular, they classify all transitive Lie bialgebroids. In special cases, they are connected to classical dynamical $r$-matrices and matched pairs induced by Poisson group actions. Following this method, it would be interesting to study local structure of weak Lie-Filippov bialgebroids.

\item Nijenhuis operators have been introduced in the theory of integrable systems in the work of Magri, Gelfand and Dorfman \cite{dorfman}, and, under the name of
hereditary operators, in that of Fuchssteiner and Fokas. Poisson-Nijenhuis structures were
defined by Magri and Morosi in \cite{magri-morosi} in their study of completely integrable systems.
There is a compatibility condition between the Poisson structure and the Nijenhuis structure that is expressed by the vanishing of a rather complicated tensorial expression. 
In \cite{kosmann-sch}, the author expressed this condition in a very simple way, using
the notion of a Lie bialgebroid. More precisely, the author proved that a Poisson structure and a Nijenhuis structure constitute a Poisson-Nijenhuis structure
if and only if the following condition is satisfied: the cotangent and tangent bundles are
a Lie bialgebroid when equipped respectively with the bracket of $1$-forms defined by the
Poisson structure, and with the deformed bracket of vector fields defined by the Nijenhuis
structure. In the context of Nambu-Poisson manifold, it would be interesting to investigate whether the Nambu form-bracket of $1$-forms and the deformed bracket of vector fields defined by the Nijenhuis structure leads to a notion of Nambu-Nijenhuis structure and in case the answer is positive, then to give a physical interpretation of compatibility condition of the associated weak Lie-Filippov bialgebroid.

\item As remarked in the introduction, given a Lie bialgebroid $(A, A^\ast)$ over a smooth manifold $M,$ the direct sum bundle $A\oplus A^\ast$ carries a Courant algebroid structure such that $A$, $A^\ast$ are both Dirac subbundles of $A\oplus A^\ast$ \cite{liu-weinstein-xu}. Conversely, if $L_1$ and $L_2$ are Dirac subbundles of a Courant algebroid $E$ which are transversal to each other, then the pair $(L_1, L_2)$ is a Lie bialgebroid, where $L_2$ is considered as the dual bundle of $L_1$ under the non degenerate pairing of the Courant algebroid $E.$  Thus, the authors extended the theory of Manin triples from Lie bialgebras to Lie bialgebroids. It would be interesting to find out how far this theory can be extended replacing Lie bialgebroid by weak Lie-Filippov bialgebroid.
\end{enumerate}
\end{remark}

\mbox{ }\\

\providecommand{\bysame}{\leavevmode\hbox to3em{\hrulefill}\thinspace}
\providecommand{\MR}{\relax\ifhmode\unskip\space\fi MR }
\providecommand{\MRhref}[2]{%
  \href{http://www.ams.org/mathscinet-getitem?mr=#1}{#2}
}
\providecommand{\href}[2]{#2}

\end{document}